\newtheorem{lemma}{Lemma}
\newtheorem{theorem}{Theorem}
\newtheorem{condition}{Condition}
\newtheorem{remark}{Remark}
\newcommand{\xe}{X^\varepsilon}
\newcommand{\ye}{Y^\eta}
\newcommand{\yeps}{Y^\varepsilon}
\author{S. Bourguin, S. Gailus, and K. Spiliopoulos}
\address{Boston University, Department of Mathematics and Statistics\\ 111 Cummington Mall, Boston, MA 02215, USA}
\email[Solesne Bourguin]{bourguin@math.bu.edu}
\email[Siragan Gailus]{siragan@math.bu.edu}
\email[Konstantinos Spiliopoulos]{kspiliop@math.bu.edu}
\thanks{KS was partially supported by the National Science Foundation (DMS 1550918)}
\title[Analysis of slow-fast systems driven by fBm]{Typical dynamics and fluctuation analysis of slow-fast systems driven by fractional Brownian Motion}
\begin{document}

\begin{abstract}
This article studies typical dynamics and fluctuations for a slow-fast dynamical system perturbed by a small fractional Brownian noise. Based on an ergodic theorem with explicit rates of convergence, which may be of independent interest, we characterize the asymptotic dynamics of the slow component to two orders (i.e., the typical dynamics and the fluctuations). The limiting distribution of the fluctuations turns out to depend upon the manner in which the small-noise parameter is taken to zero relative to the scale-separation parameter. We study also an extension of the original model in which the relationship between the two small parameters leads to a qualitative difference in limiting behavior. The results of this paper provide an approximation, to two orders, to dynamical systems perturbed by small fractional Brownian noise and subject to multiscale effects.

\end{abstract}
\subjclass[2010]{60G22, 60H10, 60H07, 60H05}
\keywords{Fractional Brownian motion,  multiscale processes, small
  noise, typical dynamics, homogenization, fluctuations}

\maketitle

\section{Introduction}
Dynamical systems exhibiting multiple characteristic scales in space or time arise naturally as models in a great variety of applied fields, including physics, chemistry, biology, neuroscience, meteorology, and mathematical finance, to name a few. It is moreover common to incorporate random perturbations into these models in order to account for imperfect information or to capture random phenomena. The particular case in which the perturbing noise is a standard Brownian motion has been studied extensively. With this choice, crucially, the Markov property and semimartingale structure of the standard Brownian motion are embedded in the system. While the analysis is simplified insofar as a host of well-developed theoretical tools may be brought to bear, important limits are placed on the flexibility of the model. For example, a physical dynamical system exhibiting long-range dependence or a particular sort of self-similarity may not be amenable to accurate description by a model driven by standard Brownian noise.

In this paper, we consider a model in which some of the random perturbation arises from a fractional Brownian motion (fBm), thereby making it possible to capture dynamical features that are out of the scope of the standard Brownian motion. More precisely, we consider $(\xe, \ye)_T = \{(\xe_t,
\ye_t)\}_{0\leq t\leq T}$ evolving in
$\mathcal{X}\times\mathcal{Y}:=\mathbb{R}^{m}\times\mathbb{R}^{d-m}$
according to the stochastic differential equation
\begin{equation}
\label{Eq:ModelSystem}
\begin{cases} d\xe_t =
c(\xe_t, \ye_t) dt + \sqrt\epsilon \sigma(\ye_t) dW^H_t \\ d\ye_t = \frac{1}{\eta} f(\ye_t) dt
+ \frac{1}{\sqrt\eta} \tau(\ye_t) dB_t \\
\xe_0=x_0\in\mathcal{X}, \hspace{1pc} \ye_0=y_0\in\mathcal{Y}.
\end{cases}
\end{equation}

Here, $W^H$ is a fractional Brownian motion with Hurst index
$H\in(1/2,1)$ and $B$ is a standard Brownian motion independent of $W^H$. The term $dW^H$ is to be understood in the sense of pathwise integration, although this pathwise integral coincides in our framework with the analogous divergence integral, and we shall freely and frequently interpret it as such in order to apply tools of Malliavin calculus (see Remark \ref{r:integrals} and Appendix \ref{A:Appendix} for a discussion of this point and for details on Malliavin calculus and integration with respect to fBm). $\varepsilon:=(\epsilon,\eta)\in\mathbb{R}^2_+$ is a pair of small positive parameters. Note that as $\varepsilon := (\epsilon, \eta)$ is taken to vanish, $\xe$ is the slow component and is perturbed by small noise, while $\ye$ is the fast component and feeds into the dynamics of $\xe$.

The main results of this work provide a rigorous description of the asymptotic behavior, to two orders, of $\xe$ as $\varepsilon:=(\epsilon, \eta)\to0$. We first show that $\xe$ converges in an $L^p$ sense, and at a particular rate, to a deterministic limiting process $\bar{X}$, which we interpret as the typical behavior of $X^\varepsilon$. We then derive a limit in distribution of the (appropriately-rescaled) fluctuations $\theta^\varepsilon:=\frac{1}{\sqrt{\epsilon}}(X^\varepsilon-\bar{X})$ about the limiting process. The limiting distribution of the fluctuations turns out to depend upon the manner in which the small asymptotic parameters are taken to vanish, even as the typical behavior does not exhibit any such dependence. In deriving the limit of the fluctuations, we assume for this reason that one is considering a class of pairs $\varepsilon := (\epsilon, \eta)$ for which there exists $\lambda\in[0, \infty)$ with $\lim_{\varepsilon\to0}\frac{\sqrt\eta}{\sqrt\epsilon} = \lambda$. For example, a functional dependence $\eta=\eta(\epsilon)$ such that $\lim_{\epsilon\to0}\frac{\sqrt\eta}{\sqrt\epsilon}=:\lambda\in[0,\infty)$ is more than sufficient.

The novelty of our setup lies in the nature of the small perturbing noise, which we take to be a fractional Brownian motion rather than a standard Brownian motion. Moreover, we allow the dynamics to evolve in the full Euclidean space, and apart from the diffusion coefficient in the fast component, we do not assume that coefficients are bounded or have bounded derivatives. Consequently, we rely in the proofs of our ergodic theorem, Theorem \ref{t:ergodic1}, and main results upon a-priori uniform bounds on both $\xe$ and its Malliavin derivative $D\xe$ with respect to the fractional Brownian noise $W^H$. The necessary bounds are derived in Lemmata \ref{l:sigmadw}, \ref{l:xbound}, and \ref{l:Dxbound}. In establishing the limit in distribution of the fluctuations, we also make use of recent results of \cite{hl:averagingdynamics}, which carry over to our setting.

Note that in (\ref{Eq:ModelSystem}), we have taken $\sigma$ to depend upon the fast variable only and not upon the slow. There are two reasons for this restriction, both relating to the fact that, by the independence of $W^H$ and $B$, $D\sigma(Y^\eta) \equiv 0$ whereas in general $D\sigma(X^\varepsilon,Y^\eta)$ would be nontrivial (recall that $D$ is the Malliavin derivative with respect to the fractional Brownian noise $W^H$). The first reason is technical in nature. As mentioned in the preceding paragraph, our proofs rely upon a-priori uniform bounds on $\xe$ and $D\xe$. To derive these bounds, we invoke a maximal inequality for the stochastic integral with respect to $W^H$, which in turn requires us to control the Malliavin derivative of the integrand. Thus, if $\sigma$ were allowed to depend upon the slow variable, we would encounter a closure problem in that to obtain a bound on the $k^{th}$-order Malliavin derivative $D^k\xe$ one would need first to obtain a bound on the $(k+1)^{st}$-order Malliavin derivative $D^{k+1}\xe$, and so on in a cascading fashion. In very special cases, e.g., in one dimension, it is possible to circumvent the problem, but this would seem to be the exception rather than the rule. The second reason has to do with modelling considerations. If one would like to interpret the slow component as the solution of an ODE perturbed by a small fractional Brownian noise, it is reasonable to formulate the model in such a way as for this noise to be centered, i.e., for the stochastic integral with respect to $W^H$ to have mean zero. In our setup, the pathwise integral coincides with the divergence integral, which is always centered (see Appendix \ref{A:Appendix} for details on Malliavin calculus and integration with respect to fBm). On the other hand, if one were to allow $\sigma$ to depend upon the slow variable then the pathwise integral would not typically be centered. It is worth noting that one's hands are tied here insofar as general results guaranteeing the existence of unique solutions of the system (\ref{Eq:ModelSystem}) are known only when the integral with respect to $W^H$ is interpreted in the pathwise sense.

In Section \ref{S:Extensions}, we study the typical behavior and fluctuations limit in the context of an extended model generalized from \eqref{Eq:ModelSystem}. The extended model takes the form
\begin{equation}
\label{Eq:ExtendedModel}
\begin{cases} d\xe_t &=
\frac{\sqrt{\epsilon}}{\sqrt{\eta}}b(\xe_t, \yeps_t)dt + c(\xe_t, \yeps_t) dt + \sqrt\epsilon \sigma(\yeps_t) dW^H_t \\ d\yeps_t &= \frac{1}{\eta} f(\yeps_t) dt + \frac{1}{\sqrt{\epsilon\eta}} g(\yeps_t) dt
+ \frac{1}{\sqrt\eta} \tau(\yeps_t) dB_t \\
\xe_0&=x_0\in\mathcal{X}, \hspace{1pc} \yeps_0=y_0\in\mathcal{Y}.
\end{cases}
\end{equation}
Recall that in the context of the original model, the typical behavior does not depend upon the manner in which the small asymptotic parameters are taken to vanish, and that the fluctuations analysis can be carried through assuming that one is concerned with a class of pairs $\varepsilon:=(\epsilon, \eta)$ for which there exists $\lambda\in[0,\infty)$ with $\lim_{\varepsilon\to0}\frac{\sqrt\eta}{\sqrt\epsilon}=\lambda$. Moving to the extended model, however, the introduction of the terms corresponding to the coefficients $b$ and $g$ introduces a qualitative discrepancy between regimes that is reflected even in the typical behavior. Accordingly, when we are considering the extended model, we not only assume, from the beginning, the existence of the limit $\lambda\in[0,\infty)$, but also distinguish two possibilities:
\begin{enumerate}[(i)]
	\item $\lambda = 0$, the `first regime' or `homogenization regime'
	\item $\lambda \in (0, \infty)$, the `second regime' or `averaging regime.'
\end{enumerate}
To obtain the limit in distribution of the fluctuations in the context of the extended model, we further assume that the convergence of $\frac{\sqrt\eta}{\sqrt\epsilon}$ to $\lambda$ takes place at a particular rate with respect to $\sqrt\epsilon$. Note that the homogenization regime is that in which the term $\frac{\sqrt{\epsilon}}{\sqrt{\eta}} b(\xe, \yeps)$ is asymptotically singular. In precise analogy to the analysis done in the case of perturbation by standard Brownian motion in \cite{gs:disctime, spiliopoulos2014fluctuation}, we shall see that the limiting contribution of the asymptotically-singular term can be captured in terms of the solution of an appropriate Poisson equation. The proofs of our results for the original model \eqref{Eq:ModelSystem} then carry over with minor modifications. Having already presented the full proofs for \eqref{Eq:ModelSystem}, we therefore describe only the adjustments necessary for \eqref{Eq:ExtendedModel}. The extended model \eqref{Eq:ExtendedModel} is particularly relevant when, for example, a fast intermediate scale forms part of the slow component. The scaling in front of the term corresponding to the coefficient $g$ is that which results in a nontrivial limiting contribution in the event that additional intermediate fast scales form part of the main fast component.

In the case of perturbation by standard Brownian motion, the
literature on similar limiting theorems for stochastic dynamical
systems is extensive. We mention here for completeness
\cite{BaierFreidlin,DupuisSpiliopoulos,Freidlin1978, FS,
  FWBook,gs:disctime,gs:statinf,Guillin,KlebanerLiptser,LiptserPaper,pardoux2001poisson,
  pardoux2003poisson,spiliopoulos2014fluctuation,Spiliopoulos2012}, which contain results on
related typical dynamics, central limit theorems, and large
deviations. The corresponding literature in the case of perturbation
by fractional Brownian motion is quite sparse. The most relevant
result in our case is the recent work \cite{hl:averagingdynamics}, which studies related typical behavior of systems similar to \eqref{Eq:ModelSystem}. Our results on typical behavior differ from those of \cite{hl:averagingdynamics} in that we allow most coefficients to grow polynomially in the fast variable. Consequently, as discussed above, we must derive certain a-priori bounds in order to establish our ergodic theorem, Theorem \ref{t:ergodic1}, to which we appeal in turn in establishing our main results. In this way, we obtain an explicit rate of convergence to the typical behavior. To complement the results on typical behavior found in \cite{hl:averagingdynamics} and in this work, we then derive a limit in distribution of the fluctuations, which characterizes the limiting behavior of the slow component to next order.

Let us also mention the very recent preprint \cite{PeiInahamaXu2020} (appearing on arXiv during the reviewing process of this article), in which the authors study an averaging principle (although neither homogenization nor fluctuations and not in the small-noise regime) for systems similar to (\ref{Eq:ModelSystem}), but with the drift coefficient in the slow component assumed to be uniformly bounded and the diffusion coefficient to depend on the slow component instead of the fast component. The uniform bound on the drift coefficient facilitates the derivation of a-priori bounds on the slow component and the fact that the diffusion term does not depend on the fast component means that the fBm term does not have to be averaged out (in contrast to our work). In addition, in \cite{PeiInahamaXu2020}, the coefficients of the fast component are allowed to depend upon both slow and fast components, whereas we focus in this work on the case in which the dependence is on the fast component and not the slow. The reason for this is essentially that we have chosen to obtain a-priori bounds on the slow component and its Malliavin derivative with respect to the driving fBm from independence of $\sigma(\ye)$ and $W^H$, without the need for additional assumptions (see also the discussion above equation (\ref{Eq:ExtendedModel})).

Let us now explain the organization of the rest of this paper. In Section
\ref{SS:MainResults}, we introduce notation, present our main assumptions, and state our main results. Section
\ref{S:TypicalBehavior} contains proofs of results related to
the typical behavior of $\xe$ as $\varepsilon\to0$, including supporting lemmata and our ergodic theorem. Section
\ref{S:Fluctuations} contains proofs of tightness and convergence in
distribution of the (appropriately-rescaled) fluctuations of $\xe$ about the limit
$\bar{X}$ as $\varepsilon\to 0$. Section \ref{S:Extensions}
extends our results from the original model \eqref{Eq:ModelSystem} to the extended model
\eqref{Eq:ExtendedModel}. Finally, for the convenience of the reader, Appendix
\ref{A:Appendix} collects those definitions and tools related to fractional Brownian motion, Malliavin calculus, and stochastic integration with respect to fractional Brownian motion, that are used in this paper.

\section{Notation, Conditions, and Main Results}\label{SS:MainResults}
In this section we introduce notation, present our main assumptions, and state our main results.

We will denote by $A:B$ the Frobenius inner product $\Sigma_{i,j}[a_{i,j}\cdot b_{i,j}]$ of matrices $A=(a_{i,j})$ and $B=(b_{i,j})$.
We will use single bars $|\cdot|$ to denote the Frobenius (or Euclidean) norm of a matrix, and double bars $||\cdot||$ to denote the operator norm.

Condition \ref{c:regularity} imposes conditions of growth and regularity on the drift and diffusion coefficients of the model.
\begin{condition}\label{c:regularity}\hspace{1pc}\newline
\hspace{1pc}\newline
\noindent Conditions on $c$:
\begin{enumerate}[-]
	\item $\exists$\hspace{0.5pc}$(K,q)\in\mathbb{R}^2_+$, $r\in[0,1)$; $\left|c(x,y)\right|\leq K(1+|x|^r)(1+|y|^q)$
	\item $\exists$\hspace{0.5pc}$(K,q)\in\mathbb{R}^2_+$; $\left|\nabla_x c(x,y)\right|+\left|\nabla_x\nabla_x c(x,y)\right|\leq K(1+|y|^q)$
	\item $c$, $\nabla_x c$, $\nabla_x \nabla_x c$, and $\nabla_y \nabla_y c$ are continuous in $(x,y)$
	\item $c$, $\nabla_x c$, and $\nabla_x \nabla_x c$ are locally H\"older continuous in $y$ uniformly in $x$
\end{enumerate}

\noindent Conditions on $\sigma$:
\begin{enumerate}[-]
	\item $\exists$\hspace{0.5pc}$(K,q)\in\mathbb{R}^2_+$; $|\sigma(y)|\leq K(1+|y|^q)$
	\item $\sigma\sigma^T$ is uniformly nondegenerate
\end{enumerate}

\noindent Conditions on $f$ and $\tau$:
\begin{enumerate}[-]
	\item $f$ and $\tau\tau^T$ are twice differentiable, and, along with their first and second derivatives, are locally H\"older continuous
	\item $\tau\tau^T$ is uniformly bounded and uniformly nondegenerate.
\end{enumerate}

\end{condition}
Condition \ref{c:recurrencebasic} is a basic condition of recurrence type on the fast component, yielding ergodic behavior.
\begin{condition}\label{c:recurrencebasic}\hspace{1pc}\newline
\begin{align*}
\lim_{|y|\to\infty} y \cdot f(y) &= -\infty.
\end{align*}
\end{condition}

To derive most of our results we shall in fact assume a stronger recurrence condition.
\begin{condition}\label{c:recurrence}\hspace{1pc}\newline
For real constants $\alpha>0$, $\beta\geq2$, and $\gamma>0$, we shall write:
\begin{enumerate}[-]
	\item Condition \ref{c:recurrence}-$(\alpha,\beta)$: one has
	\begin{align*}
	y \cdot f(y) + \alpha |y|^\beta + \frac12 (\beta-2+d-m)\sup_{\tilde y\in\mathcal{Y}}|\tau(\tilde y)|^2 \leq 0
	\end{align*}
	for $|y|$ sufficiently large
	\item Condition \ref{c:recurrence}-$(\alpha,\beta,\gamma)$:
	Condition \ref{c:recurrence}-$(\alpha,\beta)$ holds and, moreover, one has $||\nabla_x c(x,y)|| \leq \gamma|y|^\beta$ for $|y|$ sufficiently large.
\end{enumerate}
\end{condition}

\begin{remark} Clearly, Condition \ref{c:recurrencebasic} is implied by Condition \ref{c:recurrence}-$(\alpha, \beta)$, which in turn is implied by the stronger condition
\begin{align*}
\lim_{|y|\to\infty} y \cdot f(y) + \alpha |y|^\beta &= -\infty.
\end{align*}
\end{remark}
One has the infinitesimal generator
\begin{align}
\mathcal{L}&:=f\cdot\nabla_y+\frac12(\tau\tau^T):\nabla_y^2 \label{infinitesimalgenerator}
\end{align}
for the rescaled fast dynamics. Conditions \ref{c:regularity} and \ref{c:recurrencebasic} are enough to guarantee that one has on $\mathcal{Y}$ a unique invariant measure $\mu$ corresponding to the operator $\mathcal{L}$, as discussed for example in \cite{ReyBellet2006}.

\begin{remark}
Therefore, in particular, the process $\ye$, obtained as the solution of an SDE that does not depend on $\xe$, does not explode and is well defined for all times. Meanwhile, Condition \ref{c:regularity} guarantees that the drift coefficient of $\xe$ is Lipschitz continuous in the variable $x$ locally in the variable $y$. Thus one sees that our assumptions are sufficient to guarantee that $\xe$ is well defined on $[0, T]$ (compare the situation with, e.g., \cite[Sections 2 and 4]{pardoux2001poisson}). For general results on existence and uniqueness of solutions of equations with standard and fractional Brownian motions, see for instance \cite{guerranualart, kubiliusfractionalbrownianmotion, kubiliuspsemimartingale, Mishurashevchenko}.
\end{remark}

We now state our main results, the first of which concerns the typical behavior of $\xe$ as
$\varepsilon\to 0$. We prove in Theorem \ref{t:xlimit} that $\xe$ converges in an $L^p$ sense, and at a particular rate, to a deterministic limiting process $\bar{X}$. This implies in particular that one has convergence in probability. The proof is deferred to Section \ref{S:TypicalBehavior}.
\begin{theorem}\label{t:xlimit} Assume Conditions \ref{c:regularity} and \ref{c:recurrence}-$(\alpha,\beta,\gamma)$, where $\alpha \geq 0$, $\beta \geq 2$, $\gamma \geq 0$, and $T \beta \gamma \sup_{y \in \mathcal{Y}} ||\tau(y)||^2 < 2 \alpha$. For any $0 < p < \frac{2 \alpha}{T \beta \gamma \sup_{y \in \mathcal{Y}} ||\tau(y)||^2}$, there is a constant $\tilde K$ such that for $\varepsilon:=(\epsilon,\eta)$ sufficiently small,
\begin{align*}
E\sup_{0\leq t\leq T}\left|\xe_t - \bar X_t\right|^p&\leq \tilde K \left( \sqrt\epsilon^p + \sqrt\eta^p \right),
\end{align*}
where $\bar X$ is the (deterministic) solution of the integral equation
\begin{align*}
\bar X_t &= x_0 + \int^t_0 \bar c(\bar X_s)ds,
\end{align*}
where $\bar c$ is the averaged function
\begin{align*}
\bar c(x) &:= \int_{\mathcal{Y}}c(x,y)d\mu(y).
\end{align*}
\end{theorem}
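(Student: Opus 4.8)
The plan is to control the difference $\xe_t - \bar X_t$ by decomposing it into three contributions and then closing the estimate with Grönwall's inequality. Subtracting the two integral equations and inserting the averaged coefficient $\bar c(\xe_s)$, I would write
\begin{align*}
\xe_t - \bar X_t &= \int_0^t \left[ c(\xe_s, \ye_s) - \bar c(\xe_s) \right] ds + \int_0^t \left[ \bar c(\xe_s) - \bar c(\bar X_s) \right] ds + \sqrt\epsilon \int_0^t \sigma(\ye_s)\, dW^H_s.
\end{align*}
The first term is the ergodic fluctuation of the fast process about its average, the second is a Lipschitz remainder in the slow variable, and the third is the small fractional noise; I expect these to contribute at orders $\sqrt\eta^p$, (absorbed), and $\sqrt\epsilon^p$ respectively.

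For the Lipschitz remainder, I would first record that $\bar c$ is globally Lipschitz in $x$: differentiating under the integral sign and using the growth bound on $\nabla_x c$ from Condition \ref{c:regularity} gives $\left|\nabla_x \bar c(x)\right| \leq \int_{\mathcal{Y}} \left|\nabla_x c(x,y)\right| d\mu(y) \leq K \int_{\mathcal{Y}}(1 + |y|^q)\, d\mu(y) < \infty$, the finiteness of the moment following from the recurrence assumption on $f$ (this also makes the limiting ODE for $\bar X$ well-posed). Hence this term is bounded by $C \int_0^t \left|\xe_s - \bar X_s\right| ds$ and is exactly what Grönwall will absorb. For the noise term, which already carries the prefactor $\sqrt\epsilon$, I would invoke the maximal inequality for the integral against $W^H$ from Lemma \ref{l:sigmadw} together with the a-priori moment bounds on $\ye$; this is precisely where the restriction that $\sigma$ depend only on the fast variable is used, since that maximal inequality requires control of the Malliavin derivative of the integrand, which is supplied by Lemmata \ref{l:xbound} and \ref{l:Dxbound} and by the independence of $\sigma(\ye)$ from $W^H$.

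The main obstacle is the ergodic fluctuation term $\int_0^t \left[ c(\xe_s,\ye_s) - \bar c(\xe_s) \right] ds$, whose smallness reflects the equilibration of the fast process on the timescale $\eta$. To bound its $p$-th moment at rate $\sqrt\eta^p$ I would appeal directly to the ergodic theorem with explicit rates, Theorem \ref{t:ergodic1}, handling the (non-frozen) slow argument $\xe_s$ through the uniform moment bounds of Lemma \ref{l:xbound} and the regularity of $c$ in $x$. The coupling between the slow and fast components and the polynomial growth of $c$ in the fast variable are the delicate points: this is exactly why the hypothesis $T\beta\gamma \sup_{y\in\mathcal{Y}} \norm{\tau(y)}^2 < 2\alpha$ and the attendant restriction on the admissible range of $p$ enter, as they guarantee that the relevant moments of $\ye$ (and hence of the polynomially-growing coefficient evaluated along the trajectory) remain finite uniformly on $[0,T]$.

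Finally, taking the supremum over $t$, raising to the $p$-th power, and passing to expectation, I would combine the $\sqrt\epsilon^p$ estimate for the noise term, the $\sqrt\eta^p$ estimate for the ergodic term, and the bound $E\left(\int_0^t \left|\xe_s - \bar X_s\right| ds\right)^p \leq T^{p-1}\int_0^t E\sup_{0\leq v\leq s}\left|\xe_v - \bar X_v\right|^p ds$ for the Lipschitz term. Grönwall's inequality applied to $t \mapsto E\sup_{0\leq u\leq t}\left|\xe_u - \bar X_u\right|^p$ then produces the claimed bound $\tilde K\left(\sqrt\epsilon^p + \sqrt\eta^p\right)$, with $\tilde K$ depending on $T$, $p$, and the Lipschitz constant of $\bar c$.
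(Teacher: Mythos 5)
Your proposal is correct and is essentially the paper's own argument: the paper proves Theorem \ref{t:xlimit} by simply citing Theorem \ref{t:ergodic1} together with the proof of Theorem 1 in \cite{gs:statinf}, and that argument is exactly your three-term decomposition, with the ergodic term controlled at rate $\sqrt\eta^p$ by Theorem \ref{t:ergodic1}, the noise term at rate $\sqrt\epsilon^p$ by Lemma \ref{l:sigmadw}, and the Lipschitz remainder absorbed by Gr\"onwall. (One small inaccuracy that does not affect the argument: for the noise term, Lemma \ref{l:Dxbound} is not needed, since $D\sigma(\ye)\equiv 0$ by independence; the bound on $D\xe$ is used inside the proof of Theorem \ref{t:ergodic1}, not here.)
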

Our second main result concerns the asymptotic behavior of the
(appropriately-rescaled) fluctuations of $\xe$ around $\bar X$ as $\varepsilon\to 0$. We prove in Theorem \ref{t:fluctuations} that the fluctuations converge in distribution to a particular limit, which we characterize explicitly.
The proof is deferred to Section \ref{S:Fluctuations}. In order to state the theorem, we note that by Theorem 3 in \cite{pardoux2003poisson}, the equations
\begin{equation}
\label{poissonequation0}
\begin{cases}  \displaystyle &\mathcal{L}\Phi(x,y) = -\left( c(x,y)-\bar{c}(x) \right) \\
  \displaystyle &\int_{\mathcal{Y}}\Phi(x,y)d\mu(y)=0
\end{cases}
\end{equation}
admit a unique solution $\Phi$ in the class of functions that grow at most polynomially in $|y|$ as $|y|\to\infty$.
Recall that $\theta^\varepsilon:=\frac{1}{\sqrt\epsilon}(X^\varepsilon-\bar X)$.
\begin{theorem}\label{t:fluctuations} Suppose that one is concerned with a class $\mathcal{C}$ of pairs $\varepsilon:=(\epsilon, \eta)$ for which there exists $\lambda\in[0,\infty)$ such that $\lim_{\varepsilon\to0}\frac{\sqrt\eta}{\sqrt\epsilon}=\lambda$. Assume Conditions \ref{c:regularity} and \ref{c:recurrence}-$(\alpha,\beta,\gamma)$, where $\alpha \geq 0$, $\beta \geq 2$, $\gamma \geq 0$, and $T \beta \gamma \sup_{ y \in \mathcal{Y} } || \tau ( y ) ||^2 < 2 \alpha$. 
With $\Phi$ as in (\ref{poissonequation0}), set $\Sigma_\Phi := (\overline{(\nabla_y\Phi\tau)(\nabla_y\Phi\tau)^T})^{1/2}$. One then has that the family of processes $\{\theta^\varepsilon\}_\varepsilon$ converges in distribution on the space $C([0,T]; \mathcal{X})$ (endowed, as usual, with the topology of uniform convergence) as $\varepsilon\to0$ to the law of the solution $\theta$ of the mixed SDE
\begin{align*}
\begin{cases}
\displaystyle \theta_t = \int^t_0 (\nabla_x \bar c)(\bar X_s) \cdot \theta_s ds + \lambda \int^t_0 \Sigma_\Phi(\bar X_s) d\tilde{B}_s +  \int^t_0 \bar \sigma \delta \tilde{W}^H_s\\
\displaystyle \theta_0=0,
\end{cases}
\end{align*}
where $\bar \sigma := \int_{\mathcal{Y}}\sigma(y)d\mu(y)$, $\tilde{W}^H$ is a fractional Brownian motion with Hurst
index $H$, and $\tilde{B}$ is a standard Brownian
motion independent of $\tilde{W}^H$.
\end{theorem}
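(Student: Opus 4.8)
The plan is to work from the exact equation for the fluctuation process. Subtracting the equations for $\xe$ and $\bar X$ and dividing by $\sqrt\epsilon$ gives
\[
\theta^\varepsilon_t = \frac{1}{\sqrt\epsilon}\int_0^t\left(c(\xe_s,\ye_s) - \bar c(\bar X_s)\right)ds + \int_0^t\sigma(\ye_s)\,dW^H_s,
\]
and the first step is to split the drift as $c(\xe,\ye)-\bar c(\bar X) = \big(c(\xe,\ye)-\bar c(\xe)\big) + \big(\bar c(\xe)-\bar c(\bar X)\big)$. For the second piece I would Taylor-expand $\bar c$ about $\bar X_s$; since $\nabla_x\nabla_x c$ is dominated by $K(1+|y|^q)$ the averaged Hessian $\nabla^2\bar c$ is bounded, so $\tfrac{1}{\sqrt\epsilon}\big(\bar c(\xe_s)-\bar c(\bar X_s)\big) = (\nabla_x\bar c)(\bar X_s)\cdot\theta^\varepsilon_s + O(\sqrt\epsilon\,|\theta^\varepsilon_s|^2)$, which produces the linear drift of the limiting SDE plus a negligible remainder. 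For the first piece I would invoke the Poisson equation (\ref{poissonequation0}): since $c-\bar c = -\mathcal{L}\Phi$, applying the change-of-variables formula to $\Phi(\xe_t,\ye_t)$ — where, crucially, the fBm ($H>1/2$) contributes no quadratic variation and there is no cross-variation with $B$ by independence — yields
\[
\frac{1}{\sqrt\epsilon}\int_0^t(c-\bar c)(\xe_s,\ye_s)\,ds = \frac{\eta}{\sqrt\epsilon}\,\mathcal{R}^\varepsilon_t + \frac{\sqrt\eta}{\sqrt\epsilon}\int_0^t(\nabla_y\Phi\,\tau)(\xe_s,\ye_s)\,dB_s,
\]
where $\mathcal{R}^\varepsilon_t$ collects $\Phi(\xe_t,\ye_t)$, the Lebesgue integral of $\nabla_x\Phi\cdot c$, and the $O(\sqrt\epsilon)$ fBm integral $\sqrt\epsilon\int_0^t\nabla_x\Phi\cdot\sigma\,dW^H_s$. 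Since $\tfrac{\eta}{\sqrt\epsilon} = \sqrt\eta\cdot\tfrac{\sqrt\eta}{\sqrt\epsilon}\to0$, and the polynomial growth of $\Phi$ together with the a-priori bounds of Lemmata \ref{l:sigmadw}, \ref{l:xbound}, \ref{l:Dxbound} keep $\mathcal{R}^\varepsilon$ bounded in probability, the term $\tfrac{\eta}{\sqrt\epsilon}\mathcal{R}^\varepsilon$ is asymptotically negligible.

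This reduces the statement to the linear equation $\theta^\varepsilon_t = \int_0^t(\nabla_x\bar c)(\bar X_s)\cdot\theta^\varepsilon_s\,ds + N^\varepsilon_t$, with driving term $N^\varepsilon_t := \frac{\sqrt\eta}{\sqrt\epsilon}M^\varepsilon_t + \int_0^t\sigma(\ye_s)\,dW^H_s + (\text{negligible remainders})$, where $M^\varepsilon_t := \int_0^t(\nabla_y\Phi\,\tau)(\xe_s,\ye_s)\,dB_s$. Because the solution map $N\mapsto\theta$ of this linear equation is continuous on $C([0,T];\mathcal{X})$ (by variation of constants, using that $\nabla_x\bar c$ is bounded), it suffices to prove that $N^\varepsilon$ converges in distribution to $\lambda\int_0^\cdot\Sigma_\Phi(\bar X_s)\,d\tilde B_s + \int_0^\cdot\bar\sigma\,\delta\tilde W^H_s$ with $\tilde B$ independent of $\tilde W^H$, and then to apply the continuous mapping theorem and uniqueness for the limiting linear SDE. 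I would first establish tightness of $N^\varepsilon$ in $C([0,T];\mathcal{X})$ from the same a-priori bounds (for the fBm integral via the maximal/Hölder estimates of Lemma \ref{l:sigmadw}, and for the martingale via Burkholder--Davis--Gundy together with the uniform bound $\E\sup_{0\le t\le T}|\theta^\varepsilon_t|^p\le C$, which follows from Theorem \ref{t:xlimit} since $\sqrt\eta/\sqrt\epsilon\to\lambda$).

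For the limit of the martingale part I would apply the martingale central limit theorem: the bracket $\langle M^\varepsilon\rangle_t = \int_0^t(\nabla_y\Phi\,\tau)(\nabla_y\Phi\,\tau)^T(\xe_s,\ye_s)\,ds$ converges in probability, by the ergodic theorem (Theorem \ref{t:ergodic1}) applied to the polynomially-growing observable $(\nabla_y\Phi\,\tau)(\nabla_y\Phi\,\tau)^T$ together with $\xe\to\bar X$, to $\int_0^t\Sigma_\Phi^2(\bar X_s)\,ds$; hence $\frac{\sqrt\eta}{\sqrt\epsilon}M^\varepsilon$ converges in distribution to $\lambda\int_0^\cdot\Sigma_\Phi(\bar X_s)\,d\tilde B_s$. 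For the fractional term I would exploit that $\sigma$ depends on $\ye$ only and $\ye$ is independent of $W^H$, so $D\sigma(\ye)\equiv0$; the isometry for the divergence integral with $H>1/2$ then reduces $\E\,|\int_0^t(\sigma(\ye_s)-\bar\sigma)\,\delta W^H_s|^2$ to $H(2H-1)\int_0^t\int_0^t\E\big[(\sigma(\ye_u)-\bar\sigma)(\sigma(\ye_v)-\bar\sigma)\big]\,|u-v|^{2H-2}\,du\,dv$, with no Malliavin-trace correction, and this vanishes as $\eta\to0$ by the mixing of the fast process, giving $\int_0^\cdot\sigma(\ye_s)\,dW^H_s\to\bar\sigma\,\tilde W^H$. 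This is precisely the point at which the averaging results of \cite{hl:averagingdynamics} carry over to our setting.

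The main obstacle is the joint convergence with asymptotically independent limits $\tilde B\perp\tilde W^H$: because $\xe$, and hence the integrand of $M^\varepsilon$, depends on $W^H$ through its $O(\sqrt\epsilon)$ noise term, $M^\varepsilon$ is not measurable with respect to the filtration of $B$, so independence is not immediate. I would circumvent this by replacing $\xe_s$ with $\bar X_s$ in the integrand of $M^\varepsilon$; the resulting error, after multiplication by $\frac{\sqrt\eta}{\sqrt\epsilon}$, has bracket of order $\eta\int_0^t|\theta^\varepsilon_s|^2\,(1+|\ye_s|^q)\,ds\to0$, while the modified martingale is driven by $B$ alone and is therefore independent of $W^H$. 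Conditioning on $W^H$ — under which the modified martingale retains its law and the fractional integral is Gaussian — then yields joint convergence of $N^\varepsilon$ with the two limiting noises independent. Combining the continuous mapping theorem with the uniqueness of the linear limiting SDE upgrades this to convergence in distribution of the full family $\{\theta^\varepsilon\}$ on $C([0,T];\mathcal{X})$, completing the proof.
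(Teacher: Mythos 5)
Your proposal is correct and rests on the same skeleton as the paper's proof: the identical three-way decomposition of $\theta^\varepsilon$ into the Taylor-expandable drift difference, the centered drift, and the fBm integral (the paper's $I^\varepsilon$, $II^\varepsilon$, $III^\varepsilon$ in \eqref{firstterm}, \eqref{secondterm}, \eqref{thirdterm}), the Poisson equation \eqref{poissonequation0} combined with the It\^o formula of Lemma \ref{l:itolemma} to convert the centered drift into the martingale $\frac{\sqrt\eta}{\sqrt\epsilon}\int_0^\cdot(\nabla_y\Phi\tau)\,dB_s$ plus vanishing remainders, and the mixing of the fast process to kill $\int_0^\cdot(\sigma(\ye_s)-\bar\sigma)\,\delta W^H_s$. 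Where you genuinely differ is in the assembly and in two points the paper treats tersely. First, instead of proving tightness of the quadruple $(\theta^\varepsilon,I^\varepsilon,II^\varepsilon,III^\varepsilon)$ (Lemma \ref{l:tightness}) and then extracting Skorohod-represented subsequential limits identified through uniqueness of the limiting equation, you invert the linear equation for $\theta^\varepsilon$ by variation of constants and apply the continuous mapping theorem to the driving term $N^\varepsilon$; this is a cleaner identification step, though it still requires the same tightness and a-priori estimates for $N^\varepsilon$, which you supply. Second, for the fBm term you give a self-contained second-moment computation via the divergence-integral isometry (legitimate precisely because $D\sigma(\ye)\equiv0$) plus exponential ergodicity, in place of the paper's verification of the decay $|E(\sigma(\ye_t)-\bar\sigma)|\leq C_1e^{-C_2 t/\eta}$ followed by an appeal to Theorem 4.15 of \cite{hl:averagingdynamics}; your route is more elementary and makes the rate (of order $\eta^{2H-1}$, vanishing since $H>1/2$) visible. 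Third, your device of replacing $\xe_s$ by $\bar X_s$ in the integrand of $M^\varepsilon$, so that the modified martingale is a functional of $B$ alone and hence exactly independent of $W^H$, makes explicit the asymptotic independence of $\tilde B$ and $\tilde W^H$, which the paper leaves implicit in its Skorohod/uniqueness step. Two cautions: your claim that the fBm contributes ``no quadratic variation'' is accurate only in the pathwise calculus, whereas in the divergence formulation used for the moment estimates the trace term $\epsilon\,\alpha_H\int_0^t(\nabla^2_x\Phi\sigma)(\xe_s,\ye_s)\cdot\left(\int_0^s\sigma(\ye_u)(s-u)^{2H-2}du\right)ds$ of Lemma \ref{l:itolemma} appears and must be absorbed into your remainder (it is harmless, carrying a factor $\epsilon\sqrt\eta/\sqrt\epsilon$); and the replacement step needs the mixed derivative $\nabla_x\nabla_y\Phi$ to exist with polynomial growth in $y$, which does follow from \cite{pardoux2003poisson} under Condition \ref{c:regularity} but should be stated.
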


\section{First-Order Limit or Typical Behavior}\label{S:TypicalBehavior}
In this section, we focus on proving Theorem \ref{t:xlimit}, which
establishes the first-order limit, or typical behavior, of the slow
component $\xe$ as $\varepsilon\to 0$. In order to make the exposition
easier to follow, we present several supporting lemmata leading up to
an ergodic theorem, Theorem \ref{t:ergodic1}, which is the essential
ingredient in the proof of Theorem \ref{t:xlimit}.

\begin{remark}\label{r:integrals}
While we interpret the stochastic integral $\int_0^t \sigma(\ye_s) dW^H_s$ appearing in
\eqref{Eq:ModelSystem} in the pathwise sense to appeal to existence and uniqueness results in the literature, this pathwise integral coincides in our framework with the divergence integral $\int_0^t \sigma(\ye_s) \delta W^H_s$ (see Appendix \ref{A:Appendix} for details on Malliavin calculus and integration with respect to fBm). Indeed, in view of
\eqref{relationpathwise-divergence}, the two integrals coincide as soon as the integrand is in the kernel of the Malliavin derivative associated with the fractional Brownian motion $W^H$, which is of course true in our case as $\sigma(\ye)$ and $W^H$ are independent. In what follows, we work mainly from the point of view of the divergence integral rather than the pathwise integral, so that we may apply results from the Malliavin stochastic calculus of variations.
\end{remark}
We begin by stating a maximal inequality for the divergence integral $\int^t_0 \sigma(\ye_s) \delta W^H_s$.
\begin{lemma}\label{l:sigmadw}
Assume Conditions \ref{c:regularity} and \ref{c:recurrencebasic}. For any $H^{-1}<p<\infty$, there is a constant $\tilde K$ such that for $\eta$ sufficiently small,
\begin{align*}
E \sup_{0\leq t\leq T} \left| \int^t_0 \sigma(\ye_s) \delta W^H_s \right|^p &\leq \tilde K.
\end{align*}
\end{lemma}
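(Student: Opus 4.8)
The plan is to bound the divergence integral by appealing to a Burkholder--Davis--Gundy-type maximal inequality for stochastic integration with respect to fractional Brownian motion, of the kind available for divergence integrals and recorded in Appendix \ref{A:Appendix}. For a suitable integrand $u$ and an exponent $p > H^{-1}$ (so that the relevant time-integrability exponent $1/H \in (1,2)$ is admissible), such an inequality takes the schematic form
\[
E\sup_{0\le t\le T}\left|\int_0^t u_s\,\delta W^H_s\right|^p \le C_{H,p,T}\left(E\left(\int_0^T |u_s|^{1/H}\,ds\right)^{pH} + E\left(\int_0^T\!\!\int_0^T |D_r u_s|^{1/H}\,dr\,ds\right)^{pH}\right).
\]
The essential simplification in our setting is that $\sigma(\ye)$ is independent of $W^H$---because $\ye$ is driven by $B$, which is independent of $W^H$---so that $D\sigma(\ye)\equiv0$, where $D$ is the Malliavin derivative associated with $W^H$ (this is also what reconciles the pathwise and divergence interpretations, cf. Remark \ref{r:integrals}). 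Consequently the second term on the right vanishes, and it suffices to control $E\left(\int_0^T |\sigma(\ye_s)|^{1/H}\,ds\right)^{pH}$ uniformly in $\eta$ small.

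Next I would invoke the polynomial growth of $\sigma$ from Condition \ref{c:regularity}, namely $|\sigma(y)|\le K(1+|y|^q)$, to bound $|\sigma(\ye_s)|^{1/H}\le C(1+|\ye_s|^{q/H})$. Since $pH > 1$, an application of H\"older's (or Jensen's) inequality in the time variable yields
\[
E\left(\int_0^T |\sigma(\ye_s)|^{1/H}\,ds\right)^{pH} \le C\left(T^{pH} + T^{pH-1}\int_0^T E|\ye_s|^{pq}\,ds\right),
\]
so that the lemma reduces to the uniform-in-$\eta$ moment estimate $\sup_{0<\eta\le\eta_0}\int_0^T E|\ye_s|^{pq}\,ds<\infty$.

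To obtain this estimate I would apply a Foster--Lyapunov argument directly to the fast process, whose generator is $\frac{1}{\eta}\mathcal{L}$. Taking $V(y)=(1+|y|^2)^{k}$ with $2k\ge pq$, one computes that the leading contribution to $\mathcal{L}V$ is $2k(1+|y|^2)^{k-1}(y\cdot f(y))$, which, because $y\cdot f(y)\to-\infty$ by Condition \ref{c:recurrencebasic} while $\tau\tau^T$ is uniformly bounded, dominates the remaining terms for $|y|$ large and produces a drift inequality of the form $\mathcal{L}V\le -c_1(1+|y|^2)^{k-1}+c_2$. Hence $\frac{d}{dt}E V(\ye_t)=\frac{1}{\eta}E[\mathcal{L}V(\ye_t)]\le\frac{1}{\eta}\left(c_2-c_1 E[(1+|\ye_t|^2)^{k-1}]\right)$, from which one deduces a bound on $\sup_{0\le t\le T}E|\ye_t|^{pq}$ that is uniform in $\eta$ (indeed the factor $1/\eta$ only accelerates relaxation toward equilibrium as $\eta\to0$, so that nothing degenerates in the limit). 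Feeding this back through the two displays closes the chain of estimates.

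I expect the main obstacle to be the correct formulation and application of the divergence-integral maximal inequality, together with a careful verification that the vanishing of $D\sigma(\ye)$ genuinely eliminates the Malliavin-derivative contribution; this is where the restriction $p>H^{-1}$ and the regularity of the integrand enter. The moment estimate for the fast process is, by contrast, a fairly routine Lyapunov computation; the only delicate point there is to ensure that the rate at which $y\cdot f(y)\to-\infty$ is sufficient to close the drift inequality at the polynomial order $pq$ actually required, which is why one chooses the exponent $k$ in $V$ according to $p$ and $q$.
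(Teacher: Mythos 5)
Your proposal is correct and its skeleton is the same as the paper's: the paper's proof is a one-line appeal to the maximal inequality stated after (2.14) in \cite{nualart2006} (recorded in Appendix \ref{A:Appendix}), to the polynomial bound on $\sigma$ from Condition \ref{c:regularity}, and to Lemma 1 of \cite{pardoux2003poisson} for the uniform-in-$\eta$ moments of $\ye$; the vanishing of the Malliavin-derivative term because $\sigma(\ye)$ is independent of $W^H$ is exactly the point of Remark \ref{r:integrals}, which you correctly make explicit. Where you genuinely depart is the last ingredient: instead of citing \cite{pardoux2003poisson} you re-derive the moment bound $\sup_{0\le t\le T}E|\ye_t|^{pq}<\infty$ by a Foster--Lyapunov computation. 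That route is viable and has the merit of being self-contained (it shows the estimate needs only Condition \ref{c:recurrencebasic} and the uniform bound on $\tau\tau^T$), but as written it has one unspelled step: your drift inequality $\mathcal{L}V\le -c_1(1+|y|^2)^{k-1}+c_2$ controls $EV$ only through a \emph{lower-order} moment, so the differential inequality does not close as stated. To close it, apply Jensen's inequality, $E[(1+|\ye_t|^2)^{k-1}]\ge \left(EV(\ye_t)\right)^{(k-1)/k}$, so that the right-hand side of your inequality is negative whenever $EV(\ye_t)>(c_2/c_1)^{k/(k-1)}$, giving $EV(\ye_t)\le\max\{V(y_0),(c_2/c_1)^{k/(k-1)}\}$ uniformly in $\eta$ and $t$; alternatively, use the scaling identity that $\{\ye_t\}_{0\le t\le T}$ equals in law $\{\tilde Y_{t/\eta}\}_{0\le t\le T}$ (as the paper does in Lemma \ref{l:yboundexponential}) to reduce everything to a uniform-in-time bound for the fixed process $\tilde Y$. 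Finally, your form of the maximal inequality differs cosmetically from the one in Appendix \ref{A:Appendix} (whose first term is $\int_0^T|E(u_s)|^p\,ds$ rather than $E\bigl(\int_0^T|u_s|^{1/H}ds\bigr)^{pH}$), but both versions are valid for $pH>1$ and the estimate goes through either way.
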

\begin{proof}
Recalling that $\sigma$ is polynomially bounded in its argument and appealing to Lemma 1 in \cite{pardoux2003poisson}, the claim follows from the maximal inequality stated after (2.14) in \cite{nualart2006}.
\end{proof}
We next obtain a preliminary uniform bound on the slow component.
\begin{lemma}\label{l:xbound}
Assume Conditions \ref{c:regularity} and \ref{c:recurrencebasic}. For any $0<p<\infty$, there is a constant $\tilde K$ such that for $\varepsilon := (\epsilon, \eta)$ sufficiently small,
\begin{align*}
E \sup_{0\leq t\leq T} |\xe_t|^p &\leq \tilde K.
\end{align*}
\end{lemma}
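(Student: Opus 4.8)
The plan is to start from the integral form
\[
\xe_t = x_0 + \int_0^t c(\xe_s,\ye_s)\,ds + \sqrt\epsilon\int_0^t \sigma(\ye_s)\,\delta W^H_s,
\]
bound the drift through the growth hypothesis on $c$, and dispose of the stochastic term directly via Lemma \ref{l:sigmadw}. Writing $Z:=\sqrt\epsilon\,\sup_{0\le u\le T}\abs{\int_0^u\sigma(\ye_s)\,\delta W^H_s}$ and using $\abs{c(x,y)}\le K(1+\abs{x}^r)(1+\abs{y}^q)$ from Condition \ref{c:regularity}, one gets for every $t\in[0,T]$
\[
\abs{\xe_t}\le \abs{x_0}+Z+K\int_0^t(1+\abs{\xe_s}^r)(1+\abs{\ye_s}^q)\,ds.
\]

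The decisive structural feature is that the exponent $r<1$ makes the dependence on the slow variable strictly sublinear, which lets me bypass Gronwall's inequality entirely, and hence avoid having to manufacture exponential moments of the fast process. Setting $M:=\sup_{0\le t\le T}\abs{\xe_t}$ and $A:=\int_0^T(1+\abs{\ye_s}^q)\,ds$, the previous bound yields pathwise $M\le \abs{x_0}+Z+KA+KA\,M^r$. Since $r<1$, Young's inequality applied pathwise gives $KA\,M^r\le \tfrac12 M+C_r(KA)^{1/(1-r)}$, and absorbing the $\tfrac12 M$ term leaves $M\le 2\abs{x_0}+2Z+2KA+2C_r(KA)^{1/(1-r)}$. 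To legitimize these manipulations I would first localize at $S_N:=\inf\{t:\abs{\xe_t}\ge N\}$ so that the running supremum is a priori finite, establish the estimate with $M$ replaced by $\sup_{0\le t\le T\wedge S_N}\abs{\xe_t}$ (bounding the stopped stochastic integral crudely by the unstopped $Z$), and then send $N\to\infty$ by monotone convergence.

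Raising to the power $p$ and using $(\sum a_i)^p\le C_p\sum a_i^p$, the problem reduces to bounding $\Ex{Z^p}$ together with $\Ex{A^p}$ and $\Ex{A^{p/(1-r)}}$. The term $\Ex{Z^p}$ is controlled by Lemma \ref{l:sigmadw} when $p>H^{-1}$ and, for smaller $p$, by Jensen's inequality; the prefactor $\sqrt\epsilon^{\,p}$ keeps it uniform in $\epsilon$. For the moments of $A$, Jensen's inequality in the time variable gives $\Ex{A^m}\le T^{m-1}2^{m-1}\bigl(T+\int_0^T\Ex{\abs{\ye_s}^{qm}}\,ds\bigr)$, so everything comes down to bounding the time-integrated moment $\int_0^T\Ex{\abs{\ye_s}^{k}}\,ds$ uniformly in $\eta$.

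This last bound is the only genuinely probabilistic step and the main obstacle. I would obtain it by a Lyapunov argument with Dynkin's formula. Applying the generator $\tfrac1\eta\mathcal{L}$ to $V(y)=\abs{y}^{k+2}$ and using that $\tau\tau^T$ is uniformly bounded shows $\mathcal{L}V(y)\le \abs{y}^{k}\bigl[(k+2)\,y\cdot f(y)+C_k\bigr]$; Condition \ref{c:recurrencebasic} ($y\cdot f(y)\to-\infty$) forces the bracket below $-1$ for $\abs{y}$ large, so that $\mathcal{L}V(y)\le -\abs{y}^{k}+b$ for a finite constant $b$. With the usual localization, Dynkin's formula gives $\E V(\ye_T)=V(y_0)+\tfrac1\eta\int_0^T\E\mathcal{L}V(\ye_s)\,ds$, whence $\tfrac1\eta\int_0^T\Ex{\abs{\ye_s}^{k}}\,ds\le V(y_0)+\tfrac{bT}{\eta}$; the factors of $\eta$ cancel to yield $\int_0^T\Ex{\abs{\ye_s}^{k}}\,ds\le \eta\,V(y_0)+bT$, which is bounded uniformly over $\eta\le1$. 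Notably this requires only the basic recurrence Condition \ref{c:recurrencebasic}, since the time-rescaling means one needs a time-averaged rather than a uniform-in-time moment bound on the fast process. Combining the three moment estimates completes the proof.
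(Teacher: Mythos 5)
Your proof is correct, but it takes a genuinely different route from the paper's. The paper raises to the power $p$ and takes expectations immediately, splits the mixed term $|\xe_s|^{rp}(1+|\ye_s|^q)^p$ by Young's inequality with exponents $\frac1r$ and $\frac{1}{1-r}$, bounds the fast-variable moments by citing Lemma 1 of \cite{pardoux2003poisson} and the stochastic integral by Lemma \ref{l:sigmadw}, and then closes the estimate with Gr\"onwall's inequality applied to $t\mapsto E\sup_{0\leq s\leq t}|\xe_s|^p$. You instead exploit the strict sublinearity $r<1$ \emph{pathwise}: the weighted Young absorption $KAM^r\leq\frac12 M+C_r(KA)^{1/(1-r)}$ yields a closed-form bound on the running supremum with no Gr\"onwall step, and your stopping-time localization makes the absorption rigorous (a finiteness point the paper leaves implicit in its Gr\"onwall application). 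You also replace the citation to Pardoux--Veretennikov by a self-contained Lyapunov/Dynkin argument with $V(y)=|y|^{k+2}$, which cleverly uses the $\frac1\eta$ time-rescaling to cancel the singular factor and obtain the \emph{time-averaged} moment bound $\int_0^T E|\ye_s|^k\,ds\leq \eta V(y_0)+bT$ under only the basic recurrence Condition \ref{c:recurrencebasic}; this matches the hypotheses of the lemma exactly and is a nice observation. Two caveats: first, your motivating remark that Gr\"onwall would force one to ``manufacture exponential moments of the fast process'' is inaccurate --- the paper's Gr\"onwall proof of this lemma also needs only polynomial moments; exponential moments (Lemma \ref{l:yboundexponential}) enter only later, for the Malliavin derivative bound in Lemma \ref{l:Dxbound}. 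Second, your pathwise absorption genuinely requires $r<1$ and would not survive linear growth in $x$, whereas a Gr\"onwall-based argument is structurally more robust in that respect; since Condition \ref{c:regularity} guarantees $r<1$, this costs you nothing here.
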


\begin{proof} It is enough to prove the lemma for $p\geq2$. Recall that
\begin{align*}
\xe_t & = x_0 + \int^t_0 c(\xe_s, \ye_s) ds + \sqrt\epsilon \int^t_0 \sigma(\ye_s) \delta W^H_s.
\end{align*}

By Condition \ref{c:regularity}, there are constants $K>0$, $q>0$, and $r\in[0,1)$ such that $|c(x,y)|\leq K(1+|x|^r)(1+|y|^q)$. By Lemma 1 in \cite{pardoux2003poisson}, Lemma \ref{l:sigmadw} above, and Young's inequality with conjugate exponents $\frac{1}{r}$ and $\frac{1}{1-r}$, for some constants $C_j$, for $t\in[0,T]$, and for $\varepsilon$ sufficiently small,
\begin{align*}
E \sup_{0\leq s\leq t} |\xe_s|^p &\leq C_1 E \left(|x_0|^p + \int^t_0|c(\xe_s,\ye_s)|^pds + \sup_{0\leq s\leq t}\left|\int^s_0\sigma(\ye_u) \delta W^H_u\right|^p\right)\\
&\leq C_2 \left( 1 + E\int^t_0(1+|\xe_s|^r)^p(1+|\ye_s|^q)^pds \right)\\
&\leq C_3 \left( 1 + E\int^t_0|\xe_s|^{rp}(1+|\ye_s|^q)^pds + E\int^t_0(1+|\ye_s|^q)^pds \right)\\
&\leq C_4 \left( 1 + E\int^t_0|\xe_s|^pds + E\int^t_0(1+|\ye_s|^q)^{\frac{p}{1-r}}ds + E\int^t_0(1+|\ye_s|^q)^pds \right)\\
&\leq C_5 \left( 1 + \int^t_0\sup_{0\leq u\leq s}|\xe_u|^pds \right).
\end{align*}
The proof is complete upon applying the Gr\"onwall inequality.
\end{proof}
Taking together the bound on the slow component in Lemma \ref{l:xbound} and the bound on the fast component in Lemma 1 in \cite{pardoux2003poisson}, we now show that polynomially-bounded measurable functions of $\xe$ and $\ye$ represent classes in $L^p(\Omega\times[0,T])$.
\begin{lemma}\label{l:extendedbound} Assume Conditions \ref{c:regularity} and \ref{c:recurrencebasic}. Let $h$ be a measurable function on $\mathcal{X} \times \mathcal{Y}$ and suppose that constants $K, r, q > 0$ exist for which $|h(x,y)|\leq K(1+|x|^r)(1+|y|^q)$. For any $0<p<\infty$, there is a constant $\tilde K$ such that for $\varepsilon:=(\epsilon,\eta)$ sufficiently small,
\begin{align*}
E\int^T_0|h(\xe_t,\ye_t)|^pdt&\leq\tilde K.\
\end{align*}
\end{lemma}
\begin{proof}
Note that, using our assumption on the function $h$, we can write
\begin{align*}
E\int^T_0|h(\xe_t,\ye_t)|^pdt &\leq E \int^T_0 K^p (1+|\xe_t|^r)^p (1+|\ye_t|^q)^p dt\\
&\leq \frac{K^p}{2}\left( E\int^T_0(1+|\xe_t|^r)^{2p}dt + E\int^T_0(1+|\ye_t|^q)^{2p} dt \right).
\end{align*}
The terms inside the parentheses are bounded respectively by Lemma \ref{l:xbound} above and Lemma 1 in \cite{pardoux2003poisson}, concluding the proof.
\end{proof}
As we have mentioned, the proof of our ergodic theorem relies on having first obtained uniform bounds not only on the slow component $\xe$ but also on its Malliavin derivative $D\xe$ with respect to the fractional Brownian motion $W^H$. The next lemma provides appropriate technical uniform bounds on exponential moments of the fast component, which we will then use to establish the necessary bound on the Malliavin derivative.
\begin{lemma}\label{l:yboundexponential} Assume Conditions \ref{c:regularity} and \ref{c:recurrence}-$(\alpha, \beta)$, where $\alpha > 0$ and $\beta \geq 2$. For any $\nu\geq0$ such that $\nu \beta \sup_{y\in\mathcal{Y}}||\tau(y)||^2 < 2\alpha $, there is a constant $\tilde K$ such that for all $\eta>0$,
\begin{align*}
\sup_{0\leq t\leq T}Ee^{\nu|\ye_t|^\beta}\leq\tilde K.
\end{align*}
\end{lemma}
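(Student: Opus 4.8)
The goal is to bound $\sup_{0 \le t \le T} E e^{\nu |\ye_t|^\beta}$ uniformly in $\eta$, under Condition \ref{c:recurrence}-$(\alpha,\beta)$. The natural approach is to apply It\^o's formula to the functional $V(y) := e^{\nu |y|^\beta}$ and to use the recurrence condition to dominate the drift of the resulting semimartingale, thereby deriving a differential inequality for the expectation that can be closed via Gr\"onwall. Since $\ye$ solves $d\ye_t = \frac{1}{\eta} f(\ye_t)\,dt + \frac{1}{\sqrt\eta}\tau(\ye_t)\,dB_t$, the generator applied to $V$ scales like $\frac{1}{\eta}\mathcal{L}V$, so the factors of $\eta$ will appear uniformly and we must show that the drift contribution is nonpositive for large $|y|$ so that the $\frac{1}{\eta}$ does not cause blow-up.

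First I would compute $\mathcal{L}V(y)$ for $V(y) = e^{\nu|y|^\beta}$. Writing $\phi(y) = \nu|y|^\beta$, one has $\nabla_y \phi = \nu\beta|y|^{\beta-2}y$ and $\nabla_y^2\phi = \nu\beta|y|^{\beta-2}\Id + \nu\beta(\beta-2)|y|^{\beta-4}yy^T$. Hence
\begin{align*}
\mathcal{L}V(y) = V(y)\Big[ \nu\beta|y|^{\beta-2}\big(f(y)\cdot y\big) + \tfrac12(\tau\tau^T)(y):\big(\nabla_y^2\phi + \nabla_y\phi(\nabla_y\phi)^T\big)\Big].
\end{align*}
The dominant term inside the bracket, for large $|y|$, comes from the square of the first-order derivative: $\tfrac12(\tau\tau^T):\nabla_y\phi(\nabla_y\phi)^T$ is of order $\nu^2\beta^2|y|^{2\beta-2}$, which exceeds all other contributions since $2\beta-2 \ge \beta$ for $\beta\ge2$. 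The key algebraic step is to show that the bracketed quantity is bounded above, which I would establish by factoring out $\nu\beta|y|^{\beta-2}$ and invoking Condition \ref{c:recurrence}-$(\alpha,\beta)$, namely $y\cdot f(y) + \alpha|y|^\beta + \frac12(\beta-2+d-m)\sup_{\tilde y}|\tau(\tilde y)|^2 \le 0$. The constant $\frac12(\beta-2+d-m)$ in the recurrence condition is precisely calibrated to absorb the trace terms $\tfrac12(\tau\tau^T):\nabla_y^2\phi$ arising from the Hessian of $|y|^\beta$. Combined with the hypothesis $\nu\beta\sup_y\|\tau(y)\|^2 < 2\alpha$, which controls the quadratic-in-gradient term against the $-\alpha|y|^\beta$ margin, one obtains that outside a compact set the bracket is $\le -\delta|y|^{2\beta-2}$ for some $\delta > 0$, so in particular $\mathcal{L}V(y) \le C$ for all $y$ (the compact set where the bracket may be positive contributes only a bounded amount since $V$ is continuous there).

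Given the pointwise bound $\mathcal{L}V(y) \le C$, I would apply It\^o's formula: $E V(\ye_t) = V(y_0) + \frac{1}{\eta}\int_0^t E\,\mathcal{L}V(\ye_s)\,ds \le V(y_0) + \frac{C}{\eta}\,t$, which unfortunately still carries a $\frac{1}{\eta}$ and grows. The fix is to retain the negative feedback: one writes $\mathcal{L}V(y) \le -\delta' V(y) + C'$ (valid globally, since $V(y)|y|^{2\beta-2}$ dominates $V(y)$ for large $|y|$, while both are bounded on compacts), yielding $\frac{d}{dt}E V(\ye_t) \le \frac{1}{\eta}\big(-\delta' E V(\ye_t) + C'\big)$. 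Gr\"onwall then gives $E V(\ye_t) \le V(y_0)e^{-\delta' t/\eta} + \frac{C'}{\delta'}$, which is bounded uniformly in $t\in[0,T]$ and crucially uniformly in $\eta > 0$. A standard localization argument with stopping times $\tau_N = \inf\{t : |\ye_t| \ge N\}$ handles the fact that $V$ and its derivatives grow and the It\^o correction must first be justified before taking expectations; one derives the inequality up to $t\wedge\tau_N$ and passes to the limit by Fatou together with nonexplosion of $\ye$ (guaranteed, as noted in the remark following \eqref{infinitesimalgenerator}).

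The main obstacle is the sharp algebraic estimate showing $\mathcal{L}V(y) \le -\delta' V(y) + C'$ globally: one must verify that the positive contribution of the gradient-squared term $\tfrac12\nu^2\beta^2(\tau\tau^T):yy^T|y|^{2\beta-4}$, bounded above by $\tfrac12\nu^2\beta^2\sup_y\|\tau\|^2|y|^{2\beta-2}$, is strictly dominated by the negative margin $-\nu\beta\alpha|y|^{\beta-2}\cdot|y|^\beta = -\nu\beta\alpha|y|^{2\beta-2}$ supplied by the recurrence condition, i.e. that $\tfrac12\nu\beta\sup_y\|\tau\|^2 < \alpha$, which is exactly the hypothesis $\nu\beta\sup_y\|\tau\|^2 < 2\alpha$. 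Bookkeeping the trace terms of order $|y|^{\beta-2}$ against the $\frac12(\beta-2+d-m)\sup\|\tau\|^2$ slack in the recurrence condition is the delicate part, and one must ensure the leading coefficient is strictly negative so that $|y|^{2\beta-2}$ (hence $V(y)$ times a growing factor) dominates for the Gr\"onwall step. The uniformity in $\eta$ then comes essentially for free, since $\eta$ enters only as an overall positive factor $\frac{1}{\eta}$ multiplying a quantity whose sign is already controlled.
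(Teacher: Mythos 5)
Your proposal is correct, and the algebraic heart of it --- dominating the gradient-squared term $\tfrac12\nu^2\beta^2|y|^{2\beta-4}|y^T\tau(y)|^2$ by the margin $\alpha|y|^\beta$ supplied by Condition \ref{c:recurrence}-$(\alpha,\beta)$, which is exactly where the hypothesis $\nu\beta\sup_y\Vert\tau(y)\Vert^2<2\alpha$ enters, while the $\tfrac12(\beta-2+d-m)\sup_{\tilde y}|\tau(\tilde y)|^2$ slack absorbs the Hessian-trace terms --- is the same computation that drives the paper's proof. But your probabilistic mechanism is genuinely different. You prove a global Foster--Lyapunov inequality $\mathcal{L}V\le-\delta'V+C'$ for $V(y)=e^{\nu|y|^\beta}$ and close with Gr\"onwall plus localization and Fatou, observing that uniformity in $\eta$ is automatic because dissipation and forcing both carry the same factor $1/\eta$. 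The paper instead (i) time-changes, using that $\{\ye_t\}_{0\le t\le T}$ equals in law $\{\tilde Y_{t/\eta}\}_{0\le t\le T}$ for an $\eta$-free diffusion $\tilde Y$, so that uniformity in $\eta$ becomes uniformity in $t\in[0,\infty)$; (ii) keeps everything in the exponent, splitting $E[e^{I_t}e^{II_t}]$ by Young's inequality into a drift factor $Ee^{\ell I_t}$, bounded pathwise via the recurrence condition, and an exponential-martingale factor of unit mean; and (iii) proves the a.s. divergence of the exit times $t_N$ via the Ikeda--Watanabe comparison theorem together with a Burkholder-Davis-Gundy/Gr\"onwall moment bound. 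Your route is shorter and avoids the exponential-martingale and comparison-theorem machinery, at the price of citing nonexplosion of $\ye$ rather than proving it (legitimate here, by the remark in Section \ref{SS:MainResults}), whereas the paper's lemma is self-contained on that point. One caution in executing your plan: $EV(\ye_t)$ cannot be differentiated before it is known to be finite, and the naive Gr\"onwall inequality for the stopped quantity $EV(\ye_{t\wedge\tau_N})$ does not close, because replacing it by $E[1_{\{t\le\tau_N\}}V(\ye_t)]$ costs a boundary term of size $e^{\nu N^\beta}P(\tau_N\le t)$; the clean order is to apply It\^o to $e^{\delta'(t\wedge\tau_N)/\eta}V(\ye_{t\wedge\tau_N})$, take expectations using that the stopped stochastic integral is a true martingale, and only then let $N\to\infty$ by Fatou --- which is what your sketch indicates, so this is a matter of care in the write-up rather than a gap.
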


\begin{proof} Let $\tilde B$ be a standard Brownian motion and let
  $\tilde Y$ denote the solution of the stochastic differential equation
\begin{equation*}
\begin{cases}  \displaystyle &d\tilde Y_t = f(\tilde Y_t) dt + \tau(\tilde Y_t) d\tilde B_t \\
  \displaystyle &\tilde Y_0 = y_0.
\end{cases}
\end{equation*}
Since $\{\ye_t\}_{0\leq t\leq T}$ has the same law as $\{\tilde Y_{t/\eta}\}_{0\leq t\leq T}$, the claim of the lemma is equivalent to the statement that
\begin{align}
\sup_{0\leq t<\infty}Ee^{\nu|\tilde Y_t|^\beta}<\infty\label{e:ueb}.
\end{align}
Fix $N\in\mathbb{N}$ and put $t_N:=\inf\{t\in[0,\infty];|\tilde Y_t|\geq N\}$. Let $\tilde Y_N$ denote the process obtained by halting $\tilde Y$ at time $t_N$, i.e., let $\{\tilde Y_{N,t}\}_{t \geq 0} :=
\{\tilde Y_{t \wedge t_N}\}_{t \geq 0}$. We will show that there is a constant $\tilde K$ such that for all $N\in\mathbb{N}$,
\begin{align}
\sup_{0\leq t<\infty}Ee^{\nu|\tilde Y_{N,t}|^\beta}<\tilde K\label{e:uebN}
\end{align}
and that
\begin{align}
P(\lim_{N\to\infty}t_N=\infty)=1.\label{e:hittingtime}
\end{align}
Taking together (\ref{e:uebN}) and (\ref{e:hittingtime}), (\ref{e:ueb}) follows easily.

To establish (\ref{e:uebN}), choose $\ell>1$ such that $\ell \nu \beta \sup_{y\in\mathcal{Y}}||\tau(y)||^2 \leq 2\alpha $ and apply It\^o's lemma to obtain
\begin{align}
Ee^{\nu|\tilde Y_{N,t}|^\beta} &=
                                 e^{\nu|y_0|^\beta}E\left[e^{\nu\int^t_0\nabla(|\tilde
                                 Y_{N,s}|^\beta)d \tilde Y_{N,s} +
                                 \frac{\nu}{2}\int^t_0\nabla^2(|\tilde
                                 Y_{N,s}|^\beta):(\tau\tau^T)(\tilde
                                 Y_{N,s})ds}\right]\nonumber \\
&= e^{\nu|y_0|^\beta}E\left[e^{I_t} \cdot e^{II_t}\right],\label{e:uebN1}
\end{align}
where, with $1_{d-m}$ denoting the $(d-m)\times(d-m)$ identity matrix,
\begin{align*}
I_t&:=\nu\int^t_0 \Bigg( \beta|\tilde Y_{N,s}|^{\beta-2}\tilde Y_{N,s} \cdot f(\tilde Y_{N,s}) + \frac{\nu \ell}{2(\ell-1)} \beta^2|\tilde Y_{N,s}|^{2\beta-4}|\tilde Y_{N,s} \cdot \tau(\tilde Y_{N,s})|^2 \\
&\hspace{6pc}+ \frac{1}{2}\left(\beta(\beta-2)|\tilde Y_{N,s}|^{\beta-4}\tilde Y_{N,s} \otimes \tilde Y_{N,s} + \beta|\tilde Y_{N,s}|^{\beta-2}1_{d-m}\right):(\tau\tau^T)(\tilde Y_{N,s}) \Bigg) ds,\\
II_t&:=\nu\int^t_0 \beta|\tilde Y_{N,s}|^{\beta-2}\tilde Y_{N,s} \cdot \tau(\tilde Y_{N,s}) d\tilde B_s - \frac{\nu^2 \ell}{2(\ell-1)}\int^t_0 \beta^2|\tilde Y_{N,s}|^{2\beta-4}|\tilde Y_{N,s} \cdot \tau(\tilde Y_{N,s})|^2 ds.
\end{align*}
Applying Young's inequality with conjugate exponents $\ell$ and $\frac{\ell}{\ell-1}$,
\begin{align}
e^{\nu|y_0|^\beta}E\left[e^{I_t} \cdot e^{II_t}\right] &\leq e^{\nu|y_0|^\beta}\left(\frac{1}{\ell}Ee^{\ell I_t} + \frac{\ell-1}{\ell}Ee^{\frac{\ell}{\ell-1}II_t}\right).\label{e:uebN2}
\end{align}

Note that, on the one hand, by Condition \ref{c:recurrence}-$(\alpha, \beta)$,
there is a constant $C$ independent of $N$ such that
$Ee^{\ell I_t} \leq C$, and on the other hand, $e^{\frac{\ell}{\ell-1}II}$ is an exponential martingale with unit mean. Consequently,
\begin{align}
e^{\nu|y_0|^\beta}\left(\frac{1}{\ell}Ee^{\ell I_t} + \frac{\ell-1}{\ell}Ee^{\frac{\ell}{\ell-1}II_t}\right) &\leq e^{\nu|y_0|^\beta}\left(\frac{1}{\ell} C + \frac{\ell-1}{\ell}\right).\label{e:uebN3}
\end{align}

Putting together (\ref{e:uebN1}), (\ref{e:uebN2}), and (\ref{e:uebN3}), we obtain (\ref{e:uebN}) with $\tilde K := \frac{C+\ell-1}{\ell}e^{\nu|y_0|^\beta}$.

It remains to verify (\ref{e:hittingtime}). By \cite[Chapter 6,
Theorem 4.1]{IW}, one may realize on some probability space $(\tilde
\Omega, \tilde{\mathcal{F}}, \tilde P)$ real-valued stochastic
processes $\{\Xi_t\}_{t \geq 0}$ and $\{\Xi^+_t\}_{t \geq 0}$ such
that $\Xi_0=\Xi^+_0=|y_0|^2$,
\begin{equation*}
\tilde P \left( \forall t \geq 0, \sup_{0 \leq s \leq t} \Xi_s \leq \sup_{0 \leq s \leq t} \Xi^+_s \right) = 1,
\end{equation*}
$\Xi$ is equal in law to $|\tilde Y|^2$, and $\Xi^+$ is an It\^o diffusion with generator $a(\xi)\left(b(\xi)\frac{\partial}{\partial \xi}+\frac{1}{2}\frac{\partial^2}{\partial \xi^2}\right)$, where, for $\xi > 0$,
\begin{align*}
a(\xi)&:=\sup_{y\in\mathcal{Y}; |y|^2=\xi} \left(4y^T\tau(y)\tau^T(y)y\right),\\
b(\xi)&:=\sup_{y\in\mathcal{Y}; |y|^2=\xi} \left(\frac{2y\cdot f(y) + |\tau(y)|^2}{4y^T\tau(y)\tau^T(y)y}\right).
\end{align*}

It is enough, then, to show that $\tilde P \left(\lim_{N\to\infty}\inf\{t\in[0,\infty];\Xi^+_t \geq N\}=\infty\right) = 1$. If this were not the case, one would have some fixed $\tilde T>0$ for which the event
\begin{align*}
\tilde A & := \left\{ \sup_{N \in \mathbb{N}} \left( \inf\{t\in[0,\tilde T];\Xi^+_t \geq N\} \right) < \tilde T \right\}
\end{align*}
is such that $\tilde P ( \tilde A ) > 0$. One would then have that for
all $N \in \mathbb{N}$, $E \sup_{0 \leq t \leq \tilde T}|\Xi^+_t| \geq N \tilde P ( \tilde A )$, whence immediately $E \sup_{0 \leq t \leq \tilde T}|\Xi^+_t| = \infty$. It therefore suffices to show that for each fixed $\tilde T > 0$, $E \sup_{0 \leq t \leq \tilde T}|\Xi^+_t| < \infty$.

By Condition \ref{c:recurrence}-$(\alpha, \beta)$, one sees in
particular that $a(\xi)b(\xi)$ is negative for $\xi$ sufficiently
large, say $\xi>\xi_0$. In light of this observation, by \cite[Chapter 6, Theorem 1.1]{IW} coupled with a stopping-and-starting argument it suffices to consider an It\^o diffusion $\tilde \Xi$ with initial value $\tilde \Xi_0 = |y_0|^2\wedge\xi_0$ and generator $\frac{1}{2} a(\xi) \frac{\partial^2}{\partial\xi^2}$, and to show that for each fixed $\tilde T > 0$, $E \sup_{0 \leq t \leq \tilde T}|\tilde\Xi_t| < \infty$.

To this end, suppose that
\begin{align*}
\tilde\Xi_t &= \tilde\Xi_0 + \int^t_0 \sqrt{a(\tilde\Xi_s)}dV_s,
\end{align*}
where $V_s$ is a standard Brownian motion in one dimension. By the Burkholder-Davis-Gundy inequality and the fact that $a(\xi)\leq 4 (\sup_{y\in\mathcal{Y}}||\tau(y)||^2) (1+\xi^2)$, we have, for $K:=16\sup_{y\in\mathcal{Y}}||\tau(y)||^2$,
\begin{align*}
E \sup_{0 \leq s \leq t} \tilde\Xi^2_s & \leq 2\tilde\Xi^2_0 + 2 E \sup_{0 \leq s \leq t} \left| \int^s_0 \sqrt{a(\tilde\Xi_u)}d\tilde{B}_u \right|^2\\
& \leq 2\tilde\Xi^2_0 + 2 K t + 2 K \int^t_0 E \sup_{0 \leq u \leq s} \tilde\Xi^2_u du,
\end{align*}
whence Gr\"onwall's inequality gives
\begin{align*}
E \sup_{0 \leq t \leq \tilde T} \tilde\Xi^2_t & \leq \left(2 \tilde\Xi^2_0 + 2 K \tilde T\right) e^{2 K \tilde T}.
\end{align*}
This completes the proof of the lemma.
\end{proof}
With Lemma \ref{l:yboundexponential} in hand, we are now in a position to establish the necessary bound on the Malliavin derivative $D\xe$ of the slow component $\xe$.

\begin{lemma}\label{l:Dxbound} Assume Conditions \ref{c:regularity} and \ref{c:recurrence}-$(\alpha, \beta, \gamma)$, where $\alpha > 0$, $\beta \geq 2$, $\gamma > 0$, and $T \beta \gamma \sup_{y\in\mathcal{Y}}||\tau(y)||^2 < 2 \alpha$. For any $0 < p < \frac{2 \alpha}{T \beta \gamma \sup_{y\in\mathcal{Y}}||\tau(y)||^2}$, there is a constant $\tilde K$ such that for $\varepsilon:=(\epsilon,\eta)$ sufficiently small,
\begin{align*}
\sup_{0\leq s, t\leq T}E|D_s\xe_t|^p&\leq\tilde K.
\end{align*}
\end{lemma}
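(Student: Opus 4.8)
The plan is to compute the Malliavin derivative $D_s\xe_t$ explicitly by differentiating the mild form of the slow equation, obtain a Gr\"onwall-type integral inequality for $E|D_s\xe_t|^p$, and close it using the exponential moment bound on the fast component from Lemma \ref{l:yboundexponential}. First I would recall that
\begin{align*}
\xe_t &= x_0 + \int^t_0 c(\xe_u, \ye_u)du + \sqrt\epsilon \int^t_0 \sigma(\ye_u)\delta W^H_u,
\end{align*}
and apply the Malliavin derivative $D_s$ with respect to $W^H$. Because $\ye$ is driven by the independent Brownian motion $B$, it lies in the kernel of $D$, so $D_s\ye_u \equiv 0$ and $D_s\sigma(\ye_u)\equiv 0$. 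The chain rule then yields, for $s \leq t$ (and $D_s\xe_t = 0$ for $s>t$),
\begin{align*}
D_s\xe_t &= \sqrt\epsilon\,\sigma(\ye_s) + \int^t_s (\nabla_x c)(\xe_u, \ye_u)\, D_s\xe_u\, du,
\end{align*}
the $\sqrt\epsilon\,\sigma(\ye_s)$ term arising from differentiating the divergence integral and the remaining term from differentiating $c$ through its dependence on $\xe_u$. This is a linear (random) integral equation in $u\mapsto D_s\xe_u$ whose solution can be written via the associated fundamental matrix.

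The key estimate is then to bound the $p$-th moment of this linear equation. Taking norms and using $|D_s\xe_t| \leq \sqrt\epsilon\,|\sigma(\ye_s)| + \int_s^t \|(\nabla_x c)(\xe_u,\ye_u)\|\,|D_s\xe_u|\,du$, the natural route is to write the solution through the exponential of $\int_s^t \|(\nabla_x c)(\xe_u,\ye_u)\|\,du$ and estimate
\begin{align*}
E|D_s\xe_t|^p &\leq \tilde K\, \epsilon^{p/2}\, E\left[|\sigma(\ye_s)|^p \exp\left(p\int^t_s \|(\nabla_x c)(\xe_u,\ye_u)\|\,du\right)\right].
\end{align*}
Here is where Condition \ref{c:recurrence}-$(\alpha,\beta,\gamma)$ enters decisively: the bound $\|\nabla_x c(x,y)\| \leq \gamma|y|^\beta$ for large $|y|$ (together with continuity handling the compact region) controls the exponent by $p\gamma\int_s^t |\ye_u|^\beta du$. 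Combined with the polynomial growth $|\sigma(y)|\leq K(1+|y|^q)$, I would apply H\"older's inequality to split the product and then exchange expectation and time-integral (Jensen/Fubini on the normalized measure $\tfrac{1}{t-s}du$ inside the exponential) to reduce everything to quantities of the form $\sup_{0\leq u\leq T} E\exp(\nu|\ye_u|^\beta)$. The exponent $\nu$ produced in this way is proportional to $p\beta\gamma T$, and the constraint $p < \frac{2\alpha}{T\beta\gamma\sup_y\|\tau(y)\|^2}$ is precisely what guarantees $\nu\beta\sup_y\|\tau(y)\|^2 < 2\alpha$, so that Lemma \ref{l:yboundexponential} applies and the exponential moment is finite uniformly in $u$. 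The polynomial factor $|\sigma(\ye_s)|^p$ is absorbed by enlarging the exponent slightly (a polynomial times an exponential of $|y|^\beta$ is dominated by an exponential with marginally larger coefficient), which is harmless provided the strict inequality in the hypothesis leaves room; I would choose the auxiliary H\"older exponents to exploit this slack.

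The main obstacle I expect is the bookkeeping around the exponential-moment constraint: one must arrange the H\"older split and the choice of conjugate exponents so that the resulting coefficient $\nu$ in $E\exp(\nu|\ye_u|^\beta)$ stays strictly below the threshold $2\alpha/(\beta\sup_y\|\tau\|^2)$ dictated by Lemma \ref{l:yboundexponential}, while simultaneously accommodating the polynomial prefactor from $\sigma$ and integrating the exponent over $[s,t]\subseteq[0,T]$ rather than evaluating at a single time. Since the admissible range of $p$ is stated with a strict inequality, there is a positive gap $2\alpha - p T\beta\gamma\sup_y\|\tau\|^2 > 0$ to spend, and the delicate part is verifying that this gap suffices after the polynomial factor and the Jensen step have each consumed a sliver of it; this is where the interplay between the time horizon $T$, the growth exponent $\beta$, and the constant $\gamma$ must be tracked carefully. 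Once the exponential moment is controlled uniformly in $u\in[0,T]$, the factor $\epsilon^{p/2}$ only helps, and taking the supremum over $s,t\in[0,T]$ and absorbing all constants into $\tilde K$ completes the argument; the uniform (rather than $\epsilon$-dependent) statement follows because the bound is finite even at $\epsilon$ of order one and all constants are independent of $\varepsilon$ for $\varepsilon$ sufficiently small.
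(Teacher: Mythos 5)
Your proposal is correct and follows essentially the same route as the paper's proof: the same linear integral equation for $D_s\xe_t$ (using that $\ye$ is independent of $W^H$, so $D\sigma(\ye)\equiv 0$), Gr\"onwall to produce the exponential of $\int_s^t \|\nabla_x c(\xe_u,\ye_u)\|\,du$, a Young/H\"older split of the polynomial prefactor from the exponential factor, Jensen's inequality to convert the exponential of a time integral into a time average of exponentials, and finally Lemma \ref{l:yboundexponential}, with the strict inequality on $p$ providing exactly the slack needed to choose the conjugate exponent. The only cosmetic difference is that the paper disposes of the $|\sigma(\ye_s)|^{p\ell/(\ell-1)}$ term via the moment bounds of Pardoux--Veretennikov rather than absorbing it into the exponential, but this is immaterial.
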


\begin{proof} It is enough to prove the lemma for $p>1$. We begin by noting that
\begin{align*}
D_s\xe_t&=\int^t_0\nabla_xc(\xe_u,\ye_u)D_s\xe_udu+\sqrt\epsilon\sigma(\ye_s)\chi_{[0,t]}(s).
\end{align*}
Hence, for $t<s$, $D_s\xe_t=0$, while for $t \geq s$,
\begin{align*}
|D_s\xe_t|^p &= |\sqrt\epsilon\sigma(\ye_s)|^p + \int^t_s p|D_s\xe_u|^{p-2}D_s\xe_u:(\nabla_x c(\xe_u,\ye_u)D_s\xe_u) du\\
&\leq |\sqrt\epsilon\sigma(\ye_s)|^p + \int^t_s p|D_s\xe_u|^p ||\nabla_x c(\xe_u,\ye_u)|| du.
\end{align*}
Applying Gr\"onwall's inequality and Young's inequality with conjugate exponents $\frac{\ell}{\ell-1}$ and $\ell$ yet to be determined, we obtain, for $t \geq s$,
\begin{align*}
|D_s\xe_t|^p &\leq |\sqrt\epsilon\sigma(\ye_s)|^p e^{p \int^t_s ||\nabla_x c(\xe_u,\ye_u)||du}\\
&\leq I + II_t,
\end{align*}
where
\begin{align*}
I &:= \frac{\ell-1}{\ell}|\sqrt\epsilon\sigma(\ye_s)|^{\frac{\ell p}{\ell-1}},\\
II_t &:= \frac{1}{\ell}e^{\ell p \int^t_s ||\nabla_x c(\xe_u,\ye_u)|| du}.
\end{align*}

For any given choice of $\ell>1$, the expected value $E(I)$ of the first summand is easily handled by \cite[Corollary 1]{pardoux2001poisson}, so we proceed to consider the expected value of the second summand. Recalling that, by assumption, $||\nabla_xc(x,y)||\leq \gamma |y|^\beta$ for $|y|$ sufficiently large, we have, for some constant $C>0$, applying Jensen's inequality,
\begin{align*}
EI_t &\leq E\frac1\ell e^{\ell p \int^T_0||\nabla_xc(\xe_u,\ye_u)|| du}\\
&\leq \frac1\ell C^{\ell} + E\frac1\ell e^{\ell p \gamma \int^T_0|\ye_u|^\beta du}\\
&\leq \frac1\ell C^{\ell} + E\frac1{\ell T}\int^T_0 e^{\ell p T \gamma |\ye_u|^\beta}du\\
&\leq \frac1\ell C^{\ell} + \frac1\ell \sup_{0\leq t\leq T} E e^{\ell p T \gamma |\ye_t|^\beta},
\end{align*}
whence the proof is complete upon choosing $\ell>1$ small enough that $\ell p T \beta \gamma \sup_{y\in\mathcal{Y}}||\tau(y)||^2 \leq 2 \alpha$ and appealing to Lemma \ref{l:yboundexponential}.

\end{proof}
The technical ingredients for the ergodic theorem are now in place. Before moving on to the theorem, we present a version of It\^o's lemma adapted to our framework.

\begin{lemma}\label{l:itolemma} For any function $F$ of class $C^2$ on $\mathcal{X} \times \mathcal{Y}$,
\begin{align}
F(\xe_t,\ye_t)-F(x_0,y_0) &= \int^t_0 (\nabla_x F)(\xe_s,\ye_s)
                            \delta\xe_s + \int^t_0 (\nabla_y
                            F)(\xe_s,\ye_s) d\ye_s \label{e:itolemma}
                            \nonumber \\
&\hspace{2pc} + \epsilon \alpha_H \int^t_0 (\nabla^2_x
  F)(\xe_s,\ye_s):\sigma(\ye_s)\left(\int^s_0
  \sigma(\ye_u)(s-u)^{2H-2} du\right) ds \nonumber \\
&\hspace{2pc} + \frac{1}{2 \eta} \int^t_0 (\nabla^2_y F)(\ye_s):(\tau\tau^T)(\ye_s) ds,
\end{align}
where $\alpha_H := H(2H-1)$.
\end{lemma}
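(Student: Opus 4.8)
The plan is to assemble \eqref{e:itolemma} from two essentially independent pieces, exploiting that the driving noises $W^H$ and $B$ are independent. Because of this independence there is no covariation between the slow and fast noise sources, so no term in $\nabla_x\nabla_y F$ should appear and the identity ought to split into (a) the standard It\^o contribution of the $B$-driven fast component $\ye$ and (b) the fractional contribution of the $W^H$-driven slow component $\xe$. For (a) I would apply the classical It\^o formula to $\ye$, which under Condition \ref{c:regularity} is a genuine It\^o diffusion; this produces the first-order term $\int_0^t(\nabla_y F)(\xe_s,\ye_s)\,d\ye_s$ and the second-order term $\frac{1}{2\eta}\int_0^t(\nabla_y^2 F)(\xe_s,\ye_s):(\tau\tau^T)(\ye_s)\,ds$, the factor $\tfrac1\eta$ arising from the $1/\sqrt\eta$ scaling in front of $\tau\,dB$.

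For (b) the key point is that with $H>1/2$ the fractional Brownian motion has zero quadratic variation, so the pathwise (Young) calculus obeys the ordinary chain rule with no second-order correction; the fractional second-order term is instead generated when the resulting pathwise integral is converted to a divergence integral. Concretely, the pathwise differential $d\xe_s = c(\xe_s,\ye_s)\,ds + \sqrt\epsilon\,\sigma(\ye_s)\,dW^H_s$ contributes $\int_0^t(\nabla_x F)(\xe_s,\ye_s)\,d\xe_s$, and rewriting its fractional part $\sqrt\epsilon\int_0^t(\nabla_x F)(\xe_s,\ye_s)\sigma(\ye_s)\,dW^H_s$ in divergence form via the pathwise--divergence relation \eqref{relationpathwise-divergence} turns this into $\int_0^t(\nabla_x F)(\xe_s,\ye_s)\,\delta\xe_s$ plus a trace correction $\sqrt\epsilon\,\alpha_H\int_0^t\big(\int_0^s D_r[(\nabla_x F)(\xe_s,\ye_s)\sigma(\ye_s)]\,(s-r)^{2H-2}\,dr\big)\,ds$. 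Because $\sigma(\ye)$ is independent of $W^H$ one has $D_r\sigma(\ye_s)=0$ and $D_r\ye_s=0$, so the Malliavin derivative of the integrand collapses to $(\nabla_x^2 F)(\xe_s,\ye_s)\,D_r\xe_s\,\sigma(\ye_s)$. Recalling from the computation underlying Lemma \ref{l:Dxbound} that $D_r\xe_s = \sqrt\epsilon\,\sigma(\ye_r)\chi_{[0,s]}(r)+\int_r^s(\nabla_x c)(\xe_w,\ye_w)D_r\xe_w\,dw$, the diffusion part $\sqrt\epsilon\,\sigma(\ye_r)$ of the Malliavin derivative yields exactly $\epsilon\,\alpha_H\int_0^t(\nabla_x^2 F)(\xe_s,\ye_s):\sigma(\ye_s)\big(\int_0^s\sigma(\ye_u)(s-u)^{2H-2}\,du\big)\,ds$ with $\alpha_H=H(2H-1)$, which is the asserted fractional second-order term.

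The principal difficulty is that none of these formulas may be invoked off the shelf: under Conditions \ref{c:regularity} and \ref{c:recurrence} the coefficients grow only polynomially rather than being bounded, and $F$ is an arbitrary $C^2$ function, so the standard statements of the It\^o formula, the Young chain rule, and the pathwise--divergence relation do not apply verbatim. I would therefore first establish the identity for the localized processes obtained by stopping at $\tau_N:=\inf\{t:|\xe_t|+|\ye_t|\ge N\}$, on which all coefficients and derivatives of $F$ are bounded and every ingredient above is legitimate, taking care that the kernel $(s-u)^{2H-2}$ is integrable since $2H-2>-1$. The localization is then removed by sending $N\to\infty$, the passage to the limit in each term being justified by the a-priori moment bounds already in hand: Lemmata \ref{l:xbound} and \ref{l:extendedbound} control the drift and the polynomially-growing terms, Lemma \ref{l:yboundexponential} controls the exponential moments of the fast component entering the fractional term, and Lemma \ref{l:Dxbound} guarantees both that the integrand of the divergence integral lies in the Malliavin--Sobolev domain required by \eqref{relationpathwise-divergence} and that the trace correction converges. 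Supplying uniform integrability through these $L^p$-bounds upgrades the localized identity to \eqref{e:itolemma}; the control of $D\xe$ afforded by Lemma \ref{l:Dxbound} is the part I expect to demand the most care.
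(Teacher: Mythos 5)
Your strategy---ordinary It\^o calculus for the $B$-driven fast component, the Young chain rule for the $W^H$-driven slow component (legitimate since $H>1/2$ kills both the quadratic variation of $\xe$ and its cross variation with $\ye$), followed by conversion of the pathwise integral into a divergence integral via \eqref{relationpathwise-divergence}---is indeed the natural way to carry out the ``straightforward extension'' of the Al\`os--Nualart It\^o formula that the paper's one-sentence proof invokes, and your localization scheme for the unbounded coefficients is sensible. The gap is at the decisive step. The trace correction in \eqref{relationpathwise-divergence} involves the Malliavin derivative of the \emph{entire} integrand, hence the full derivative
\begin{equation*}
D_u\xe_s=\sqrt\epsilon\,\sigma(\ye_u)\chi_{[0,s]}(u)+\int_u^s(\nabla_x c)(\xe_w,\ye_w)\,D_u\xe_w\,dw,
\end{equation*}
which you write down correctly; but you then retain only the first summand and silently discard the second. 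Nothing justifies this: independence of $\sigma(\ye)$ from $W^H$ makes $D\sigma(\ye)$ vanish, but it cannot make $D\xe$ equal to $\sqrt\epsilon\,\sigma(\ye)$, precisely because the drift feeds $\xe$ back into itself---that is the whole content of the integral equation above and of Lemma \ref{l:Dxbound}. The discarded part produces the additional term
\begin{equation*}
\sqrt\epsilon\,\alpha_H\int_0^t(\nabla^2_x F)(\xe_s,\ye_s):\sigma(\ye_s)\left(\int_0^s\left(\int_u^s(\nabla_x c)(\xe_w,\ye_w)\,D_u\xe_w\,dw\right)(s-u)^{2H-2}\,du\right)ds,
\end{equation*}
which is generically nonzero whenever $\nabla_x c\not\equiv0$. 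So what your argument proves, once completed honestly, is \eqref{e:itolemma} \emph{plus} this term---equivalently, the identity in which $\epsilon\,\alpha_H\,\sigma(\ye_s)\bigl(\int_0^s\sigma(\ye_u)(s-u)^{2H-2}du\bigr)$ is replaced by $\sqrt\epsilon\,\alpha_H\,\sigma(\ye_s)\bigl(\int_0^s D_u\xe_s\,(s-u)^{2H-2}du\bigr)$---and not the identity as stated.

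That the extra term cannot be argued away is visible in the simplest example with $x$-dependent drift (stated formally; the linear drift violates the growth restriction $r<1$, but the point is structural and survives a cutoff): take $m=1$, $\epsilon=1$, $\sigma\equiv1$, $c(x,y)=x$, $F(x)=x^2$, so that $\xe_t=\int_0^t e^{t-u}\,\delta W^H_u$ and $D_u\xe_t=e^{t-u}$. A direct Gaussian computation gives
\begin{equation*}
\Ex{|\xe_t|^2}=\alpha_H\int_0^t\!\!\int_0^t e^{2t-u-v}\,|u-v|^{2H-2}\,du\,dv=t^{2H}+t^{2H+1}+O(t^{2H+2}),
\end{equation*}
whereas \eqref{e:itolemma}, whose divergence integral is centered, would force $\Ex{|\xe_t|^2}=2\int_0^t\Ex{|\xe_s|^2}\,ds+t^{2H}=t^{2H}+\tfrac{2}{2H+1}\,t^{2H+1}+O(t^{2H+2})$, and $\tfrac{2}{2H+1}\neq1$ for every $H>\tfrac12$; the defect is exactly the expectation of the discarded term. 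Consequently, repairing your proof does not mean showing the extra term vanishes (it does not); it means carrying it, i.e.\ proving the formula with the full $D_u\xe_s$ in the trace correction. Your more careful route thus exposes a contribution that the statement you were asked to prove, and the paper's citation-style proof, pass over; for what it is worth, in every later application of the lemma this contribution enters with the same small prefactors and is controlled by the same a-priori bounds (Lemmata \ref{l:yboundexponential} and \ref{l:Dxbound}) as the terms the paper already handles, so the downstream asymptotic results are not endangered---but the lemma, and any proof of it, must account for it.
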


\begin{proof} This is a straightforward extension of the well-known
  It\^o formula for the divergence integral (see e.g. \cite[Theorem 8]{alosnualart}).
\end{proof}
\begin{remark}
With $\mathcal{L}$ as in (\ref{infinitesimalgenerator}), equation (\ref{e:itolemma}) may also be written as
\begin{align*}
F(\xe_t,\ye_t)-F(x_0,y_0) &= \frac{1}{\eta}\int^t_0 (\mathcal{L}F)(\xe_s,\ye_s) ds + \int^t_0 (\nabla_x F c)(\xe_s,\ye_s) ds \\
&\hspace{2pc} + \epsilon \alpha_H \int^t_0 (\nabla^2_x F)(\xe_s,\ye_s):\sigma(\ye_s)\left(\int^s_0 \sigma(\ye_u)(s-u)^{2H-2} du\right) ds \\
&\hspace{2pc} + \sqrt{\epsilon} \int^t_0 (\nabla_x F \sigma)(\xe_s,\ye_s) \delta W^H_s + \frac{1}{\sqrt{\eta}} \int^t_0 (\nabla_y F \tau)(\xe_s,\ye_s) dB_s.
\end{align*}
\end{remark}
We are now ready to state and prove our ergodic theorem.

\begin{theorem}\label{t:ergodic1} Assume Conditions \ref{c:regularity} and \ref{c:recurrence}-$(\alpha,\beta,\gamma)$, where $\alpha > 0$, $\beta \geq 2$, $\gamma > 0$, and $T \beta \gamma \sup_{y \in \mathcal{Y}} ||\tau(y)||^2 < 2 \alpha$. Let $h$ be a differentiable function on $\mathcal{X} \times \mathcal{Y}$ and suppose that constants $K, r, q > 0$ exist for which $|h(x,y)|\leq K(1+|x|^r)(1+|y|^q)$. Suppose further that each derivative of $h$ up to second order is locally H\"older continuous in $y$ uniformly in $x$, with absolute value growing at most polynomially in $|y|$ as $|y|\to\infty$. For any $0 < p < \frac{2 \alpha}{T \beta \gamma \sup_{y \in \mathcal{Y}} ||\tau(y)||^2}$, there is a constant $\tilde K$ such that for $\varepsilon:=(\epsilon,\eta)$ sufficiently small,
\begin{align*}
E\sup_{0\leq t\leq T}\left|\int^t_0\left(h(\xe_s,\ye_s)-\bar h(\xe_s)\right)ds\right|^p&\leq \tilde K \sqrt\eta^p,
\end{align*}
where $\bar h(x)$ is the averaged function $\int_{\mathcal{Y}}h(x,y)d\mu(y)$.
\end{theorem}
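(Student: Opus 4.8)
The plan is to prove this ergodic theorem by the classical corrector (Poisson equation) method, using the It\^o formula of Lemma \ref{l:itolemma} to trade the time-average of $h - \bar h$ for a collection of boundary and martingale terms that carry explicit powers of $\eta$ and $\epsilon$. First I would introduce, for each fixed $x\in\mathcal{X}$, the solution $\Phi(x,\cdot)$ of the Poisson equation $\mathcal{L}\Phi(x,y)=-\bigl(h(x,y)-\bar h(x)\bigr)$ subject to the centering condition $\int_{\mathcal{Y}}\Phi(x,y)\,d\mu(y)=0$, exactly as in \eqref{poissonequation0} but with $h$ in place of $c$. By the growth and regularity hypotheses on $h$ together with Condition \ref{c:recurrence}, the results of \cite{pardoux2003poisson} guarantee that $\Phi$ exists, is unique in the class of polynomially-growing functions, is of class $C^2$ on $\mathcal{X}\times\mathcal{Y}$, and that $\Phi$, $\nabla_x\Phi$, $\nabla_x^2\Phi$, and $\nabla_y\Phi$ all grow at most polynomially in $|y|$.

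Next, applying the version of It\^o's lemma recorded in the remark following Lemma \ref{l:itolemma} to $\Phi(\xe_t,\ye_t)$ and using $\mathcal{L}\Phi=-(h-\bar h)$ to identify the singular $\frac1\eta$-term, I would rearrange and multiply through by $\eta$ to obtain
\begin{align*}
\int_0^t \bigl( h(\xe_s,\ye_s) - \bar h(\xe_s) \bigr)\,ds &= -\eta\bigl[ \Phi(\xe_t,\ye_t) - \Phi(x_0,y_0) \bigr] + \eta\int_0^t (\nabla_x\Phi\,c)(\xe_s,\ye_s)\,ds \\
&\hspace{1pc} + \eta\epsilon\alpha_H \int_0^t (\nabla_x^2\Phi)(\xe_s,\ye_s):\sigma(\ye_s)\left( \int_0^s \sigma(\ye_u)(s-u)^{2H-2}\,du \right)ds \\
&\hspace{1pc} + \eta\sqrt\epsilon \int_0^t (\nabla_x\Phi\,\sigma)(\xe_s,\ye_s)\,\delta W^H_s + \sqrt\eta \int_0^t (\nabla_y\Phi\,\tau)(\xe_s,\ye_s)\,dB_s.
\end{align*}
The desired estimate then reduces, via the elementary inequality $\bigl|\sum_i a_i\bigr|^p\leq C_p\sum_i|a_i|^p$, to controlling the $p$-th moment of the running supremum of each of the five terms on the right.

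I would then estimate the five terms in turn. The two non-stochastic integral terms, the boundary term and the $\nabla_x\Phi\,c$ term, are of order at most $\eta^p$ (the first up to a slowly-growing moment factor), bounded by the polynomial growth of $\Phi$ and $\nabla_x\Phi$ together with the moment estimates of Lemma \ref{l:xbound}, Lemma \ref{l:extendedbound}, Lemma \ref{l:yboundexponential}, and Lemma 1 of \cite{pardoux2003poisson}; the second-order correction term is of the still smaller order $(\eta\epsilon)^p$, using the integrability of the kernel $(s-u)^{2H-2}$ for $H\in(1/2,1)$ (so that $\int_0^s(s-u)^{2H-2}\,du\leq\frac{T^{2H-1}}{2H-1}$) and the polynomial bound on $\sigma$. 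The Brownian integral $\sqrt\eta\int_0^t(\nabla_y\Phi\,\tau)\,dB_s$ is the dominant, rate-determining term: by the Burkholder--Davis--Gundy inequality and the polynomial growth of $\nabla_y\Phi$ with $\tau$ uniformly bounded, its supremum has $p$-th moment of order exactly $\sqrt\eta^p$. Since for $\varepsilon$ small one has $\eta^p$ and $(\eta\epsilon)^p$ both bounded by a constant times $\sqrt\eta^p$, assembling the estimates yields the claim.

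The step I expect to be the main obstacle is the control of the fractional stochastic integral $\eta\sqrt\epsilon\int_0^t(\nabla_x\Phi\,\sigma)(\xe_s,\ye_s)\,\delta W^H_s$. To bound its running supremum I would invoke the maximal inequality for the divergence integral used in Lemma \ref{l:sigmadw}, but here the integrand depends on the slow variable through $\nabla_x\Phi$, so this inequality requires an a-priori bound on the Malliavin derivative of the integrand. Since $\ye$ is independent of $W^H$, so that $D\sigma(\ye)\equiv0$ and $D\ye\equiv0$, the chain rule gives $D_r\bigl[(\nabla_x\Phi\,\sigma)(\xe_s,\ye_s)\bigr]=(\nabla_x^2\Phi\,\sigma)(\xe_s,\ye_s)\,D_r\xe_s$, and the needed moment bound then follows from Lemma \ref{l:Dxbound} together with the polynomial growth of $\nabla_x^2\Phi$ and $\sigma$; this is precisely the purpose for which that bound was established. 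This term is thereby seen to be of order $(\eta\sqrt\epsilon)^p$ and hence negligible relative to the dominant $\sqrt\eta^p$, completing the proof.
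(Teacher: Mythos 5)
Your proposal follows essentially the same route as the paper's own proof: the corrector $\Phi$ from the Poisson equation of \cite{pardoux2003poisson}, the It\^o formula of Lemma \ref{l:itolemma} to produce the same five-term decomposition with the same powers of $\epsilon$ and $\eta$, the Brownian integral identified as the rate-determining $\sqrt\eta^p$ term, and the fractional divergence integral handled via the maximal inequality of \cite{nualart2006} together with the chain rule $D_r\bigl[(\nabla_x\Phi\,\sigma)(\xe_s,\ye_s)\bigr]=(\nabla_x^2\Phi\,\sigma)(\xe_s,\ye_s)\,D_r\xe_s$ and Lemma \ref{l:Dxbound}, which is exactly the paper's argument (the paper additionally spells out the Jensen/Young splitting needed so that Lemma \ref{l:Dxbound} is invoked at an exponent $\ell p$ still below the admissible threshold). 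The proof is correct and matches the paper's in structure and in all key lemmas, differing only in the immaterial sign convention for the Poisson equation.
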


\begin{proof} It is enough to prove the theorem for $p\geq2$. By \cite[Theorem 3]{pardoux2003poisson}, the equations
\begin{align*}
\begin{cases}
&\mathcal{L}\Phi(x,y)=h(x,y)-\bar{h}(x)\\
&\int_{\mathcal{Y}}\Phi(x,y)d\mu(y)=0
\end{cases}
\end{align*}
admit a unique solution $\Phi$ in the class of functions that grow at most polynomially in $|y|$ as $|y|\to\infty$. Applying Lemma \ref{l:itolemma} with $F=\Phi$ and rearranging terms gives
\begin{align*}
\int^t_0 \Big( h(\xe_s,\ye_s)-\bar{h}(\xe_s) \Big) \hspace{0.2pc} ds &= \sqrt{\eta} \hspace{0.2pc} \Bigg( \sqrt{\eta} \hspace{0.2pc} \Big(\Phi(\xe_t,\ye_t)-\Phi(x_0,y_0)\Big) - \sqrt{\eta} \int^t_0 (\nabla_x\Phi c)(\xe_s,\ye_s) ds\\
&\hspace{2pc} - \epsilon\sqrt{\eta} \hspace{0.2pc} \alpha_H \int^t_0 (\nabla^2_x\Phi\sigma)(\xe_s,\ye_s)\cdot\left(\int^s_0 \sigma(\ye_u)(s-u)^{2H-2}du\right)ds\\
&\hspace{2pc} - \sqrt{\epsilon\eta} \int^t_0 (\nabla_x\Phi\sigma)(\xe_s,\ye_s)\delta W^H_s - \int^t_0 (\nabla_y\Phi\tau)(\xe_s,\ye_s) dB_s \Bigg),
\end{align*}
where $\alpha_H := H(2H-1)$; hence, for $\varepsilon$ sufficiently small,
\begin{align}
&E\sup_{0\leq t\leq T}\Bigg|\int^t_0\Big( h(\xe_s,\ye_s)-\bar{h}(\xe_s) \Big) \hspace{0.2pc} ds\Bigg|^p\nonumber\\
&\hspace{2pc} \leq 5^p\sqrt{\eta}^p \hspace{0.2pc} \Bigg( E \sup_{0\leq t\leq T} \sqrt{\eta}^p \hspace{0.2pc} \Big|\Phi(\xe_t,\ye_t)-\Phi(x_0,y_0)\Big|^p + \sqrt{\eta}^p E \int^T_0 |(\nabla_x\Phi c)(\xe_s,\ye_s)|^p ds\nonumber\\
&\hspace{3pc} + (\epsilon\sqrt{\eta})^p \hspace{0.2pc} \alpha_H^p E
  \int^T_0
  \left|(\nabla^2_x\Phi\sigma)(\xe_s,\ye_s)\cdot\left(\int^s_0
  \sigma(\ye_u)(s-u)^{2H-2}du\right)\right|^p ds\nonumber \label{term:sigmasu}\\
&\hspace{3pc} + \sqrt{\epsilon\eta}^p E \sup_{0\leq t\leq T} \left| \int^t_0 (\nabla_x\Phi\sigma)(\xe_s,\ye_s)\delta W^H_s \right|^p + E \sup_{0\leq t\leq T} \left| \int^T_0 (\nabla_y\Phi\tau)(\xe_s,\ye_s) dB_s \right|^p \Bigg).
\end{align}
It remains to show that the expected value terms inside the parentheses are bounded uniformly in $\varepsilon$ sufficiently small. Recalling the stochastic representation of $\Phi$ in \cite{pardoux2001poisson, pardoux2003poisson} and the argument in the proof of \cite[Theorem 3]{pardoux2003poisson}, the function $\Phi$ itself and all of the derivatives of $\Phi$ that appear are continuous in $x$ and $y$ and bounded by expressions of the form $K(1+|x|^r)(1+|y|^q)$. Thus, the term $E \sup_{0\leq t\leq T} \sqrt{\eta}^p \hspace{0.2pc} \Big|\Phi(\xe_t,\ye_t)-\Phi(x_0,y_0)\Big|^p$ is bounded by Lemma \ref{l:xbound} above and \cite[Corollary 1]{pardoux2001poisson}. Meanwhile, the Riemann integral terms are bounded by Lemma \ref{l:extendedbound} (separating the two factors of the product $\sigma(\ye_u)(s-u)^{2H-2}$ that appears in (\ref{term:sigmasu}) by, for example, Young's inequality), and the ordinary Brownian integral term is bounded by the Burkholder-Davis-Gundy inequality and Lemma \ref{l:extendedbound}.

It remains only to bound the stochastic integral term $E \sup_{0\leq
  t\leq T} \left| \int^t_0 (\nabla_x\Phi\sigma)(\xe_s,\ye_s)\delta
  W^H_s \right|^p$. The maximal inequality stated after (2.14) in
\cite{nualart2006} gives a satisfactory bound. To complete the proof of the theorem, it therefore suffices to verify that the integrand $(\nabla_x\Phi\sigma)(\xe,\ye)$ is in the appropriate class, i.e., that
\begin{align*}
E\left(|(\nabla_x\Phi\sigma)(\xe,\ye)|^p_{L^{1/H}([0,T])}+|D(\nabla_x\Phi\sigma)(\xe,\ye)|^p_{L^{1/H}([0,T]^2)}\right)<\infty,
\end{align*}
uniformly in $\varepsilon$ sufficiently small.

The first summand, $E|(\nabla_x\Phi\sigma)(\xe,\ye)|^p_{L^{1/H}([0,T])}$, is easily handled by Lemma \ref{l:xbound} above and \cite[Corollary 1]{pardoux2001poisson}, so we proceed to consider the second summand. For this, we have by Jensen's inequality and then Young's inequality with conjugate exponents $\ell$ and $\frac{\ell}{\ell-1}$,
\begin{align*}
E|D(\nabla_x\Phi\sigma)(\xe,\ye)|^p_{L^{1/H}([0,T]^2)}&=E\left(\int^T_0\int^T_0\left|(\nabla^2_x\Phi\sigma)(\xe_t,\ye_t) \cdot D_s\xe_t\right|^{\frac1H}dsdt\right)^{pH}\\
&\leq T^{2pH-2}E\int^T_0\int^T_0|D_s\xe_t|^p|\nabla^2_x\Phi(\xe_t,\ye_t)\sigma(\ye_t)|^pdsdt\\
&\leq I + II,
\end{align*}
where
\begin{align*}
I&:=\frac{T^{2pH-2}}\ell E\int^T_0\int^T_0|D_s\xe_t|^{\ell p}dsdt,\\
II&:=\frac{T^{2pH-2}(\ell-1)}\ell TE\int^T_0|\nabla^2_x\Phi(\xe_t,\ye_t)\sigma(\ye_t)|^{\frac\ell{\ell-1}p}dt.
\end{align*}
Choosing $\ell>1$ sufficiently small, the term $I$ is handled by Lemma \ref{l:Dxbound}, while the term $II$ is easily handled by Lemma \ref{l:xbound} above and \cite[Corollary 1]{pardoux2001poisson}. This completes the proof of the theorem.
\end{proof}
We are now ready to prove Theorem \ref{t:xlimit} based on the above
ergodic theorem.
\begin{proof}[Proof of Theorem \ref{t:xlimit}]
 Given Theorem \ref{t:ergodic1}, the argument is as in the proof of \cite[Theorem 1]{gs:statinf}.
\end{proof}

\section{Second-Order Limit}\label{S:Fluctuations}

This section is dedicated to proving Theorem \ref{t:fluctuations},
which establishes a limit in distribution of the (appropriately-rescaled) fluctuations of $\xe$ about its deterministic typical behavior $\bar X$. We denote the fluctuations process by
$\theta^\varepsilon := \frac{1}{\sqrt\epsilon}(\xe-\bar X)$. We then have the following decomposition:
\begin{align*}
\theta^\varepsilon &= I^\varepsilon + II^\varepsilon + III^\varepsilon,
\end{align*}
where, for $0 \leq t \leq T$,
\begin{align*}
&I^\varepsilon_t := \frac{1}{\sqrt\epsilon} \int^t_0 \left( \bar c(\xe_s) - \bar c(\bar X_s) \right) ds,\\
&II^\varepsilon_t := \frac{1}{\sqrt\epsilon} \int^t_0 \left( c(\xe_s, \ye_s) - \bar c(\xe_s) \right) ds,\\
&III^\varepsilon_t := \int^t_0 \sigma(\ye_s) \delta W^H_s.
\end{align*}

\begin{lemma}\label{l:tightness} Suppose that one is concerned with a class $\mathcal{C}$ of pairs $\varepsilon := (\epsilon, \eta)$ for which there exists $\lambda \in [0, \infty)$ for which $\lim_{\varepsilon\to0}\frac{\sqrt\eta}{\sqrt\epsilon}=\lambda$. Assume Conditions \ref{c:regularity} and \ref{c:recurrence}-$(\alpha,\beta,\gamma)$, where $\alpha \geq 0$, $\beta \geq 2$, $\gamma \geq 0$, and $T \beta \gamma \sup_{ y \in \mathcal{Y} } || \tau ( y ) ||^2 < 2 \alpha$. For some $\epsilon_0>0$, one has tightness of the family of distributions on $C([0,T]; \mathcal{X}^{4})$ (endowed, as usual, with the topology of uniform convergence) associated with the family of processes $\left\{\Theta^\varepsilon\right\}_{\varepsilon\in\mathcal{C}; 0<\epsilon<\epsilon_0}$, where $\Theta^\varepsilon:=(\theta^\varepsilon, I^\varepsilon, II^\varepsilon, III^\varepsilon)$.
\end{lemma}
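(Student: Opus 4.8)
The plan is to establish tightness coordinate by coordinate. Since $C([0,T];\mathcal{X}^4)$ with the uniform topology is a finite product of Polish spaces, a family of $\mathcal{X}^4$-valued processes is tight precisely when each of its four coordinate families is tight; and because $\theta^\varepsilon = I^\varepsilon + II^\varepsilon + III^\varepsilon$ is a continuous (indeed linear) image of the other three, it suffices to prove tightness of the three families $\{I^\varepsilon\}$, $\{II^\varepsilon\}$, and $\{III^\varepsilon\}$ separately, a sum of tight families being tight. Each of these processes vanishes at $t=0$, so by the Kolmogorov--Chentsov tightness criterion it is enough to produce, for each component $Z^\varepsilon$, exponents $a>0$ and $b>0$ and a constant $C$ independent of $\varepsilon$ (for $\varepsilon$ small within $\mathcal{C}$) with $E|Z^\varepsilon_t - Z^\varepsilon_s|^a \leq C|t-s|^{1+b}$ for all $0 \leq s \leq t \leq T$. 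Throughout I would take $p$ in the admissible range $0 < p < \frac{2\alpha}{T\beta\gamma\sup_y\|\tau(y)\|^2}$ as large as convenient.

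The term $III^\varepsilon$ is the most direct: applying the maximal inequality for the divergence integral (the inequality stated after (2.14) in \cite{nualart2006}) over the subinterval $[s,t]$, together with the polynomial bound on $\sigma(\ye)$ and the fact that $\sigma(\ye)$ lies in the kernel of $D$, should yield an increment estimate of the form $E|III^\varepsilon_t - III^\varepsilon_s|^p \leq C|t-s|^{pH}$; choosing $p > 1/H$ gives $pH > 1$ and hence tightness. For $I^\varepsilon$, I would first record that $\bar c$ is Lipschitz, which follows from the bound $|\nabla_x c(x,y)| \leq K(1+|y|^q)$ in Condition \ref{c:regularity} upon integrating against $\mu$, since $\mu$ has finite moments of all orders. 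Consequently $|\bar c(\xe_s) - \bar c(\bar X_s)| \leq L|\xe_s - \bar X_s| = L\sqrt\epsilon|\theta^\varepsilon_s|$, so that $|I^\varepsilon_t - I^\varepsilon_s| \leq L\int_s^t|\theta^\varepsilon_u|\,du \leq L|t-s|\sup_{0\leq u\leq T}|\theta^\varepsilon_u|$. The crucial point is that Theorem \ref{t:xlimit} furnishes the uniform bound $E\sup_{0\leq t\leq T}|\theta^\varepsilon_t|^p = \epsilon^{-p/2}E\sup|\xe - \bar X|^p \leq \tilde K(1 + (\sqrt\eta/\sqrt\epsilon)^p)$, which stays bounded over $\mathcal{C}$ because $\sqrt\eta/\sqrt\epsilon \to \lambda < \infty$. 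Hence $E|I^\varepsilon_t - I^\varepsilon_s|^p \leq C|t-s|^p$, and $p>1$ gives tightness.

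The term $II^\varepsilon$ is where the real work lies, and here I would reuse the machinery of Theorem \ref{t:ergodic1}. Taking $\Phi$ to solve the Poisson equation $\mathcal{L}\Phi = c - \bar c$ with $\int_\mathcal{Y}\Phi\,d\mu = 0$ and applying the Itô formula of Lemma \ref{l:itolemma} over $[s,t]$ rather than $[0,t]$, one obtains a decomposition of $II^\varepsilon_t - II^\varepsilon_s$ into: boundary terms carrying the prefactor $\eta/\sqrt\epsilon$; two Riemann-integral terms carrying prefactors $\eta/\sqrt\epsilon$ and $\epsilon\sqrt\eta$; an fBm divergence-integral term carrying the prefactor $\eta$; and the principal Brownian martingale $-\frac{\sqrt\eta}{\sqrt\epsilon}\int_s^t(\nabla_y\Phi\tau)(\xe_u,\ye_u)\,dB_u$. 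I would treat each summand as a separate tight family. The principal martingale is controlled by the Burkholder--Davis--Gundy inequality: since $\sqrt\eta/\sqrt\epsilon$ is bounded and $\nabla_y\Phi\tau$ has uniformly bounded $L^p$ moments by Lemma \ref{l:xbound} and \cite[Corollary 1]{pardoux2001poisson}, one gets $E|\cdot|^p \leq C|t-s|^{p/2}$, which for $p>2$ yields tightness. Every remaining summand carries a prefactor ($\eta/\sqrt\epsilon$, $\epsilon\sqrt\eta$, or $\eta$) that tends to $0$, and each has uniformly bounded sup-norm in $L^p$ — the boundary terms by the polynomial bounds on $\Phi$ together with Lemma \ref{l:xbound}, the Riemann terms likewise, and the fBm term by Lemma \ref{l:sigmadw}-type reasoning — so each converges to the zero process in $C([0,T];\mathcal{X})$ and is therefore trivially tight.

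The main obstacle I anticipate is precisely the boundary term $\frac{\eta}{\sqrt\epsilon}\big(\Phi(\xe_t,\ye_t) - \Phi(\xe_s,\ye_s)\big)$ in the analysis of $II^\varepsilon$. Because it involves the fast variable $\ye$ at two distinct times, it admits no useful Hölder-in-$(t-s)$ increment bound, and so a naive application of the Kolmogorov criterion fails for this piece. The resolution is to avoid estimating its increments directly: one exploits that its prefactor $\eta/\sqrt\epsilon = (\sqrt\eta/\sqrt\epsilon)\cdot\sqrt\eta \to 0$ and that $E\sup_t|\Phi(\xe_t,\ye_t) - \Phi(x_0,y_0)|^p$ is bounded uniformly, so the whole boundary process vanishes in sup-norm and is handled as a degenerate, hence tight, family rather than through a modulus-of-continuity estimate. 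Assembling the tight families for $I^\varepsilon$, $II^\varepsilon$, $III^\varepsilon$, and hence for $\theta^\varepsilon$, completes the proof.
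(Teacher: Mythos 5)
Your overall architecture is sound and genuinely different from the paper's. The paper verifies tightness through the Billingsley criterion (boundedness in probability plus a modulus-of-continuity condition), and does so by splitting each of $I^\varepsilon$, $II^\varepsilon$, $III^\varepsilon$ into a principal part whose weak limit is identified plus a remainder shown to vanish uniformly in probability; in particular, for $III^\varepsilon$ the paper invokes the semigroup-decay machinery of \cite{hl:averagingdynamics} to kill $\int_0^t(\sigma(\ye_s)-\bar\sigma)\,\delta W^H_s$. Your Kolmogorov--Chentsov route for $I^\varepsilon$ and $III^\varepsilon$ is correct in substance, simpler, and avoids \cite{hl:averagingdynamics} entirely (the paper needs the limit identifications anyway for Theorem \ref{t:fluctuations}, but the lemma as stated does not). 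Two citation-level repairs are needed, though. For $III^\varepsilon$: the maximal inequality quoted after (2.14) of \cite{nualart2006} bounds its first term by $\int_s^t|E(u_r)|^p\,dr$, which scales like $|t-s|$, not $|t-s|^{pH}$; to get your claimed increment bound you should instead use the non-maximal Meyer-type estimate $E|\delta(u\chi_{[s,t]})|^p\leq C\left\lVert Eu\right\rVert^p_{L^{1/H}([s,t])}$, valid when $Du=0$, or condition on $\ye$ and use that the integral is then Gaussian. For the fBm term in the remainder of $II^\varepsilon$: the integrand $(\nabla_x\Phi\sigma)(\xe,\ye)$ is \emph{not} in the kernel of $D$, so ``Lemma \ref{l:sigmadw}-type reasoning'' does not apply; one needs the Malliavin-derivative bound of Lemma \ref{l:Dxbound}, exactly as in the proof of Theorem \ref{t:ergodic1}, which also restricts $p$ to the admissible range there.

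The genuine gap is in the term you flagged yourself: the boundary term $\frac{\eta}{\sqrt\epsilon}\bigl(\Phi(\xe_t,\ye_t)-\Phi(x_0,y_0)\bigr)$. Your resolution rests on the claim that $E\sup_{0\leq t\leq T}|\Phi(\xe_t,\ye_t)-\Phi(x_0,y_0)|^p$ is bounded uniformly in $\varepsilon$, and this is unsupported and in general false: $\Phi$ grows polynomially in $|y|$, and $\sup_{0\leq t\leq T}|\ye_t|$ is equal in law to $\sup_{0\leq s\leq T/\eta}|\tilde Y_s|$, the running maximum of the ergodic diffusion over a horizon diverging as $\eta\to0$, whose moments blow up (like a power of $\log(1/\eta)$ under Lemma \ref{l:yboundexponential}). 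This is precisely why the paper's Theorem \ref{t:ergodic1} keeps a factor $\sqrt\eta^p$ \emph{inside} the expectation and only asserts $E\sup_t\sqrt\eta^p|\Phi(\xe_t,\ye_t)-\Phi(x_0,y_0)|^p\leq\tilde K$. Note that this weighted bound does not rescue your argument either: writing $\frac{\eta}{\sqrt\epsilon}=\frac{\sqrt\eta}{\sqrt\epsilon}\cdot\sqrt\eta$ with $\frac{\sqrt\eta}{\sqrt\epsilon}\to\lambda$ and invoking the weighted bound yields only that the boundary process is bounded in probability, not that it vanishes, and boundedness alone gives no tightness for this summand since (as you correctly observed) it carries no modulus-of-continuity information. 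To close the argument when $\lambda>0$ one must actually prove $\sqrt\eta\sup_t|\Phi(\xe_t,\ye_t)|\to0$ in probability, e.g.\ by showing the running maximum of $\tilde Y$ over $[0,T/\eta]$ grows only logarithmically in $1/\eta$ (a union bound over the $O(T/\eta)$ unit time intervals combined with the exponential moments of Lemma \ref{l:yboundexponential} accomplishes this), so that any positive power of $\eta$ dominates it. This step is missing from your write-up; in fairness, the paper's own proof glosses over the same point when it asserts that $\mathcal{R}^\varepsilon_{II}$ vanishes ``by arguments as in the proof of Theorem \ref{t:ergodic1}.''
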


\begin{proof} As in Theorem 7.3 in \cite{billingsley}, a family $\left\{\Pi^\varepsilon\right\}_{\varepsilon\in\mathcal{C}; 0<\epsilon<\epsilon_0}$ represents a tight family of distributions if and only if for all $\zeta>0$,
\begin{align}
\exists N\in\mathbb{N}; &\sup_{\varepsilon\in\mathcal{C}; 0<\epsilon<\epsilon_0} P \left[ \sup_{0\leq t\leq T}|\Pi^\varepsilon_t| \geq N \right] \leq \zeta \label{firstcriterion}
\end{align}
and
\begin{align}
\forall M\in\mathbb{N}, &\lim_{\rho\to0}\sup_{\varepsilon\in\mathcal{C};0<\epsilon<\epsilon_0} P \left[ \sup_{0 \leq t_1 < t_2 \leq T, |t_1 - t_2| < \rho} |\Pi^\varepsilon_{t_1} - \Pi^\varepsilon_{t_2}| \geq \zeta, \sup_{0 \leq t \leq T}|\Pi^\varepsilon_t| \leq M \right].\label{secondcriterion}
\end{align}
Applying the triangle inequality in conjunction with this characterization, it is enough to show that $\epsilon_0>0$ may be chosen so that each family $\left\{I^\varepsilon\right\}_{\varepsilon\in\mathcal{C}; 0<\epsilon<\epsilon_0}$, $\left\{II^\varepsilon\right\}_{\varepsilon\in\mathcal{C}; 0<\epsilon<\epsilon_0}$, $\left\{III^\varepsilon\right\}_{\varepsilon\in\mathcal{C}; 0<\epsilon<\epsilon_0}$ represents a tight family of distributions.

Let us first consider separately the family $\left\{I^\varepsilon\right\}_{\varepsilon}$.
\begin{align}
I^\varepsilon_t &= \int^t_0 (\nabla_x \bar c)(\bar X_s) \cdot \theta^\varepsilon_s \hspace{0.5pc} ds + \int^t_0 \left[ (\nabla_x \bar c)(X^{\varepsilon, \dagger}_s) - (\nabla_x \bar c)(\bar X_s) \right] \cdot \theta^\varepsilon_s \hspace{0.5pc} ds \nonumber\\
&=: \int^t_0 (\nabla_x \bar c)(\bar X_s) \cdot \theta^\varepsilon_s \hspace{0.5pc} ds + \mathcal{R}^\varepsilon_{I,t},\label{firstterm}
\end{align}
where $\mathcal{R}^\varepsilon_{I,t} := \int^t_0 \left[ (\nabla_x \bar c)(X^{\varepsilon, \dagger}_s) - (\nabla_x \bar c)(\bar X_s) \right] \cdot \theta^\varepsilon_s \hspace{0.5pc} ds$ and $X^{\varepsilon, \dagger}_s$ is an appropriately-chosen point on the line segment connecting $\xe_s$ with $\bar X_s$.

By Theorem \ref{t:xlimit}, one may choose an $\epsilon_0>0$ for which
$\sup_{0 \leq t \leq T} \left| \theta^\varepsilon_t \right|$ is
bounded in probability uniformly in $\varepsilon\in\mathcal{C}; 0 < \epsilon < \epsilon_0$. The
criteria \eqref{firstcriterion} and \eqref{secondcriterion} are then obviously satisfied with $\left\{t \mapsto \int^t_0 (\nabla_x \bar c)(\bar X_s) \cdot \theta^\varepsilon_s \hspace{0.5pc} ds\right\}_{\varepsilon\in\mathcal{C}; 0 < \epsilon < \epsilon_0}$ in the role of $\left\{\Pi^\varepsilon\right\}_{\varepsilon\in\mathcal{C}; 0 < \epsilon < \epsilon_0}$, whence it follows that $\left\{t \mapsto \int^t_0 (\nabla_x \bar c)(\bar X_s) \cdot \theta^\varepsilon_s \hspace{0.5pc} ds\right\}_{\varepsilon\in\mathcal{C}; 0 < \epsilon < \epsilon_0}$ is tight. Meanwhile,
\begin{align*}
\mathcal{R}^\varepsilon_{I,t} &:= \int^t_0 \left[ (\nabla_x \bar c)(X^{\varepsilon, \dagger}_s) - (\nabla_x \bar c)(\bar X_s) \right] \cdot \theta^\varepsilon_s \hspace{0.5pc} ds
\end{align*}
vanishes in probability uniformly in $t\in[0,T]$ as $\varepsilon\to
0$. In order to see this, note firstly that
\begin{equation*}
\int^t_0 \left[ (\nabla_x \bar c)(X^{\varepsilon, \dagger}_s) - (\nabla_x \bar c)(\bar X_s) \right] ds
\end{equation*}
vanishes in probability by compactness of $[0,T]$, continuity of $\nabla_x \bar c$, and Theorem \ref{t:xlimit}, and secondly that $\sup_{0 \leq t \leq T} \left| \theta^\varepsilon_t \right|$ is bounded in probability by Theorem \ref{t:xlimit}. It is easy to deduce that $\left\{\mathcal{R}^\varepsilon_{I}\right\}_{\varepsilon\in\mathcal{C}; 0 < \epsilon < \epsilon_0}$ is tight. It follows then that $\left\{I^\varepsilon\right\}_{\varepsilon\in\mathcal{C}; 0 < \epsilon < \epsilon_0}$ is tight.

Let us now consider separately the family $\left\{II^\varepsilon\right\}_{\varepsilon}$. As mentioned before, by Theorem 3 in \cite{pardoux2003poisson}, the equations
\begin{equation}
\begin{cases}
&\mathcal{L}\Phi(x,y) = -\left( c(x,y)-\bar{c}(x) \right) \label{poissonequation} \\
&\int_{\mathcal{Y}}\Phi(x,y) d\mu(y)=0\nonumber
\end{cases}
\end{equation}
admit a unique solution $\Phi$ in the class of functions that grow at most polynomially in $|y|$ as $|y|\to\infty$. Applying Lemma \ref{l:itolemma} with $F=\Phi$ and rearranging terms gives
\begin{align}
II^\varepsilon_t &:= \frac{1}{\sqrt{\epsilon}} \int^t_0 \Big( c(\xe_s,\ye_s)-\bar{c}(\xe_s) \Big) \hspace{0.2pc} ds \nonumber\\
&= \frac{\sqrt\eta}{\sqrt\epsilon} \hspace{0.2pc} \Bigg( \sqrt{\eta} \hspace{0.2pc} \Big(\Phi(x_0,y_0)-\Phi(\xe_t,\ye_t)\Big) + \sqrt{\eta} \int^t_0 (\nabla_x\Phi c)(\xe_s,\ye_s) ds\nonumber\\
&\hspace{2pc} + \epsilon\sqrt{\eta} \hspace{0.2pc} \alpha_H \int^t_0 (\nabla^2_x\Phi\sigma)(\xe_s,\ye_s)\cdot\left(\int^s_0 \sigma(\ye_u)(s-u)^{2H-2}du\right)ds\nonumber\\
&\hspace{2pc} + \sqrt{\epsilon\eta} \int^t_0 (\nabla_x\Phi\sigma)(\xe_s,\ye_s)\delta W^H_s + \int^t_0 (\nabla_y\Phi\tau)(\xe_s,\ye_s) dB_s \Bigg)\nonumber\\
&=: \frac{\sqrt\eta}{\sqrt\epsilon} \int^t_0 (\nabla_y\Phi\tau)(\xe_s,\ye_s) dB_s + \mathcal{R}^\varepsilon_{II, t},\label{secondterm}
\end{align}
where $\alpha_H := H(2H-1)$.

The first summand, $\frac{\sqrt\eta}{\sqrt\epsilon} \int^t_0 (\nabla_y\Phi\tau)(\xe_s,\ye_s) dB_s$, converges in distribution to $\lambda \int^t_0 \Sigma_\Phi(\bar X_s) d\tilde{B}_s$, where $\Sigma_\Phi := (\overline{(\nabla_y\Phi\tau)(\nabla_y\Phi\tau)^T})^{1/2}$ and $\tilde{B}$ is a standard Brownian motion. Meanwhile, by arguments as in the proof of Theorem \ref{t:ergodic1}, $\mathcal{R}^\varepsilon_{II, t}$ vanishes in probability uniformly in $t\in[0, T]$ as $\varepsilon\to0$. It is easy to deduce that $\left\{II^\varepsilon\right\}_{\varepsilon\in\mathcal{C}; 0 < \epsilon < \epsilon_0}$ is tight.

As for the family $\left\{III^\varepsilon\right\}_{\varepsilon}$, we have with  $\bar \sigma := \int_{\mathcal{Y}}\sigma(y)d\mu(y)$,
\begin{align}
III^\varepsilon_t &= \int^t_0 \bar\sigma \delta W^H_s + \int^t_0 \left( \sigma(\ye_s) - \bar\sigma \right) \delta W^H_s\nonumber\\
&:= \int^t_0 \bar\sigma \delta W^H_s + \mathcal{R}^\varepsilon_{III, t}.\label{thirdterm}
\end{align}

We claim that $\mathcal{R}^\varepsilon_{III, t}$ vanishes in probability uniformly in $t\in[0,T]$ as $\varepsilon\to 0$, or what is the same, as $\eta\to 0$. As in the proof of Lemma \ref{l:yboundexponential}, let $\tilde B$ be a standard Brownian motion, in this case assumed to be independent of $W^H$, and let
  $\tilde Y$ denote the solution of the stochastic differential equation
\begin{equation*}
\begin{cases}  \displaystyle &d\tilde Y_t = f(\tilde Y_t) dt + \tau(\tilde Y_t) d\tilde B_t \\
  \displaystyle &\tilde Y_0 = y_0.
\end{cases}
\end{equation*}
Since $\{\ye_t\}_{0\leq t\leq T}$ has the same law as $\{\tilde Y_{t/\eta}\}_{0\leq t\leq T}$ and both are independent of $W^H$, for the purposes of this argument, one may work with either process. Thus, if we had assumed $\sigma$ to be uniformly bounded and $Y^\eta$ to begin at time $t=0$ in stationarity, then Theorem 4.15 in \cite{hl:averagingdynamics} would apply directly to establish the claim. Although we do not make these assumptions, our conditions nevertheless suffice to recover the desired convergence in probability, as we now proceed to explain.

The proof of \cite[Theorem 4.15]{hl:averagingdynamics} relies on \cite[Lemma 4.10]{hl:averagingdynamics}, in which the crucial statements are based on certain decay rates for the associated Markov semigroup. We will verify the same bounds in our framework. By \cite[Proposition 1]{pardoux2001poisson}, the uniform nondegeneracy of $\tau\tau^T$ in Condition \ref{c:regularity} together with the dissipativity of $f$ in Condition \ref{c:recurrencebasic} or \ref{c:recurrence} allow us to conclude not only that $\tilde Y$ has a unique invariant measure with finite moments of all orders, but also that for any initial condition $y_0 \in \mathcal{Y}$, each moment of $\tilde Y_t$ may be bounded uniformly in $t \geq 0$. Thus, polynomial bounds on $\sigma$ are enough to obtain uniform bounds on moments of the diffusion coefficient. Moreover, denoting the invariant measure of $\tilde Y$ by $\mu$ and the distribution of $\tilde Y_t$ by $\mu^{y_0}_t$, Condition \ref{c:recurrence} implies exponential decay as $t\to\infty$ of the total variation distance $\textrm{var}(\mu_t-\mu)$ (see for example \cite{ReyBellet2006}). Taking all of this together, one has then
\begin{align}
\left|E\left(\sigma(\ye_t) - \bar\sigma\right)\right|&=\left|E\left(\sigma(\tilde{Y}_{t/\eta}) - \bar\sigma\right)\right|\nonumber\\
&=\left|\int_{\mathcal{Y}}\sigma(y)d(\mu^{y_0}_{t/\eta}-\mu)(y)\right|\nonumber\\
&\leq \left(\int_{\mathcal{Y}}|\sigma(y)|^{p}d(\mu^{y_0}_{t/\eta}+\mu)(y)\right)^{1/p}\left(\int_{\mathcal{Y}}d|\mu^{y_0}_{t/\eta}-\mu|(y)\right)^{1/r}\nonumber\\
&\leq \left(\int_{\mathcal{Y}}K(1+|y|^{p q})d(\mu^{y_0}_{t/\eta}+\mu)(y)\right)^{1/p}\left(\textrm{var}(\mu^{y_0}_{t/\eta}-\mu)\right)^{1/r}\nonumber\\
&\leq C_1 e^{-C_2 \frac{t}{\eta}},\nonumber
\end{align}
where $C_1$ and $C_2$ are finite positive constants that depend neither on $\eta$ nor on $t$.

Therefore, in light of this exponential decay, the arguments of \cite{hl:averagingdynamics} carry over to our setting, and we conclude as desired that $\mathcal{R}^\varepsilon_{III, t}=\int^t_0 \left( \sigma(\ye_s) - \bar\sigma \right) \delta W^H_s$ vanishes in probability uniformly in $t\in[0,T]$ as $\eta\to0$. Details are omitted due to the similarity of the argument. It is then
easy to deduce that $\left\{III^\varepsilon\right\}_{\varepsilon\in\mathcal{C}; 0 < \epsilon < \epsilon_0}$ is tight.
\end{proof}
We are now ready to prove Theorem \ref{t:fluctuations}.
\begin{proof}[Proof of Theorem \ref{t:fluctuations}]
Recall that one is concerned with a class $\mathcal{C}$ of pairs $\varepsilon$. It suffices to show that any sequence of values of $\varepsilon$ in this class tending to $0$ admits a subsequence along which the $\theta^\varepsilon$ converge in distribution to the law of $\theta$. Let us therefore consider now an arbitrary sequence $\{\varepsilon_n\}^\infty_{n=1}\subset\mathcal{C}$ tending to $0$. By Lemma \ref{l:tightness}, passing to a subsequence $\{\varepsilon_{n_k}\}^\infty_{k=1}$, we may suppose that $\{(\theta^{\varepsilon_{n_k}}, I^{\varepsilon_{n_k}}, II^{\varepsilon_{n_k}}, III^{\varepsilon_{n_k}})\}^\infty_{k=1}$ is convergent in distribution. By the Skorohod representation theorem, there is a probability space $(\tilde{\Omega}, \tilde{\mathcal{F}}, \tilde{P})$ supporting stochastic processes $\{(\tilde{\theta}^{\varepsilon_{n_k}}, \tilde{I}^{\varepsilon_{n_k}}, \tilde{II}^{\varepsilon_{n_k}}, \tilde{III}^{\varepsilon_{n_k}})\}^\infty_{k=1}$ equal in distribution to $\{(\theta^{\varepsilon_{n_k}}, I^{\varepsilon_{n_k}}, II^{\varepsilon_{n_k}}, III^{\varepsilon_{n_k}})\}^\infty_{k=1}$, as well as a limiting stochastic process $(\tilde{\theta}, \tilde{I}, \tilde{II}, \tilde{III})$ to which the former converge almost surely as $k$ tends to infinity.

In light of the decompositions \eqref{firstterm}, \eqref{secondterm}, \eqref{thirdterm} and the limits identified in the proof of Lemma \ref{l:tightness} together with uniqueness of the equation defining $\tilde{\theta}$,  we conclude that $\tilde{\theta}$ must be equal in distribution to $\theta$, which completes the proof of the theorem.

\end{proof}

\section{An Extension of the Model}\label{S:Extensions}

We now consider an extension of the model. Consider
\begin{equation}
\label{e:extendedmodel}
\begin{cases} d\xe_t =
\frac{\sqrt{\epsilon}}{\sqrt{\eta}}b(\xe_t, \yeps_t)dt + c(\xe_t, \yeps_t) dt + \sqrt\epsilon \sigma(\yeps_t) \delta W^H_t \\ d\yeps_t = \frac{1}{\eta} f(\yeps_t) dt + \frac{1}{\sqrt{\epsilon\eta}} g(\yeps_t) dt
+ \frac{1}{\sqrt\eta} \tau(\yeps_t) dB_t \\
\xe_0=x_0\in\mathcal{X}, \hspace{1pc} \yeps_0=y_0\in\mathcal{Y}.
\end{cases}
\end{equation}
As in Theorem \ref{t:fluctuations}, we suppose that one is concerned with a class $\mathcal{C}$ of pairs $\varepsilon:=(\epsilon, \eta)$ for which there exists $\lambda\in[0, \infty)$ such that $\lim_{\varepsilon\to0}\frac{\sqrt\eta}{\sqrt\epsilon}=\lambda$. Notice that if $\lambda=0$ then the term $\frac{\sqrt{\epsilon}}{\sqrt{\eta}}b(\xe, \yeps)$ is asymptotically singular. Accordingly, we distinguish two possibilities:
\begin{enumerate}[(i)]
	\item $\lambda = 0$, the `first regime' or `homogenization regime'
	\item $\lambda \in (0, \infty)$, the `second regime' or `averaging regime.'
\end{enumerate}

The extended model \eqref{e:extendedmodel} is particularly relevant when, for example, a fast intermediate scale forms part of the slow component. In the literature this is sometimes referred to as the homogenization regime (see for example \cite{pavliotis2008multiscale} or \cite{spiliopoulos2014fluctuation} for related examples in the framework of perturbation by standard Brownian motion rather than fractional Brownian motion). The scaling in front of the term corresponding to the coefficient $g$ is that which results in a nontrivial limiting contribution in the event that additional intermediate fast scales form part of the main fast component.

We introduce in Condition \ref{c:regularityadditional} our growth and regularity conditions for the new coefficients in the extended model.

\begin{condition}\label{c:regularityadditional}\hspace{1pc}
\hspace{1pc}\newline
\begin{enumerate}[-]
	\item $b$ satisfies the same smoothness and growth conditions as $c$.
	\item In the first regime, $b(x,y)=b(y)$ is a function of the fast variable only and not the slow, and $b$ and its derivatives grow at most polynomially.
	\item In the first regime, $g$ satisfies the same conditions as $c$ does in terms of the $y-$dependence; in the second regime, $g$ satisfies the same conditions as $f$.
\end{enumerate}
\end{condition}

We have as before a basic condition of recurrence type on the fast component, yielding ergodic behavior.

\begin{condition}\label{c:recurrenceextendedbasic}
\begin{align}
\lim_{|y|\to\infty} y \cdot ( f + \lambda g ) (y) &= -\infty;
\end{align}
\end{condition}

As before we shall in fact assume a stronger recurrence condition for our main results.

\begin{condition}\label{c:recurrenceextended}
For real constants $\alpha>0$, $\beta\geq2$, and $\gamma>0$ we shall write:
\begin{enumerate}[-]
	\item Condition \ref{c:recurrenceextended}-$(\alpha,\beta)$: there is a neighborhood $\Lambda$ of $\lambda$ in $[0,\infty)$ such that one has
	\begin{align*}
	\sup_{\tilde{\lambda}\in\Lambda} y \cdot ( f + \tilde{\lambda} g )(y) + \alpha |y|^\beta + \frac12 (\beta-2+d-m)\sup_{\tilde y\in\mathcal{Y}}|\tau(\tilde y)|^2 \leq 0
	\end{align*}
	for $|y|$ sufficiently large.
	\item Condition \ref{c:recurrenceextended}-$(\alpha,\beta,\gamma)$:
	Condition \ref{c:recurrenceextended}-$(\alpha,\beta)$ holds and, moreover, one has, in the first regime, $||\nabla_x c(x,y)|| \leq \gamma|y|^\beta$ for $|y|$ sufficiently large, and in the second regime, perhaps for a smaller neighborhood $\Lambda$, \[
\sup_{\tilde{\lambda}\in\Lambda}||{\tilde{\lambda}}^{-1} \nabla_x b(x,y) + \nabla_x c(x,y)|| \leq \gamma|y|^\beta\]
 for $|y|$ sufficiently large.
\end{enumerate}

\end{condition}
One has the limiting infinitesimal generator
\begin{align}
\mathcal{L}&:=( f + \lambda g ) \cdot\nabla_y+\frac12(\tau\tau^T):\nabla_y^2 \label{limitinginfinitesimalgenerator}
\end{align}
for the rescaled fast dynamics. Conditions \ref{c:regularity}, \ref{c:regularityadditional}, and \ref{c:recurrenceextendedbasic} are enough to guarantee that one has on $\mathcal{Y}$ a unique invariant measure $\mu$ corresponding to the operator $\mathcal{L}$ in equation (\ref{limitinginfinitesimalgenerator}), as discussed for example in \cite{pardoux2001poisson} and \cite{ReyBellet2006}.

In the first regime, a standard centering condition tempers the
asymptotic singularity of the term
$\frac{\sqrt{\epsilon}}{\sqrt{\eta}}b(\xe, \yeps) =
\frac{\sqrt{\epsilon}}{\sqrt{\eta}}b(\yeps)$ (recall that in this
regime, we assume that $b$ is a function of the fast variable only and
not of the slow variable).
\begin{condition}\label{c:centering}
\begin{align*}
\int_{\mathcal{Y}}b(y)d\mu(y)=0.
\end{align*}
\end{condition}

The above conditions are sufficient to derive a first-order limit for the slow process $\xe$ in the context of the extended model (\ref{e:extendedmodel}). In order to obtain a second-order limit, we assume that the convergence of $\frac{\sqrt{\eta}}{\sqrt{\epsilon}}$ to $\lambda$ takes place at a particular rate. Precisely, we assume that $\lim_{\epsilon\to0}\frac{1}{\sqrt{\epsilon}} \left( \frac{\sqrt{\eta}}{\sqrt{\epsilon}} - \lambda \right) =: \kappa\in\mathbb{R}$.

We now sketch how to extend the results of the paper to the extended
model. In the first regime, we must carefully consider the limiting
contribution of the asymptotically-singular term
$\frac{\sqrt{\epsilon}}{\sqrt{\eta}}b(\xe, \yeps) =
\frac{\sqrt{\epsilon}}{\sqrt{\eta}}b(\yeps)$ to the dynamics of the
slow process $\xe$ (recall that in this regime, we assume that $b$ is
a function of the fast variable only and not of the slow one). It turns out that under Condition \ref{c:centering}, the limiting contribution may be captured in terms of the solution of a certain Poisson equation. By Theorem 3 in \cite{pardoux2003poisson}, the equations
\begin{equation}
\begin{cases}
&\mathcal{L}\Psi(y)=-b(y) \label{correction} \\
&\int_{\mathcal{Y}}\Psi(y)d\mu(y)=0\nonumber
\end{cases}
\end{equation}
admit a unique solution $\Psi$ in the class of functions that grow at most polynomially in $|y|$ as $|y|\to\infty$.

In the first regime, we will need the auxiliary drift coefficient
\begin{align}
\varphi_1(x, y) &:= (\nabla_y \Psi \cdot g)(y) + c(x, y) \label{psifirstregime},
\end{align}
where $\Psi$ is as in (\ref{correction}). To play the same role in the second regime, we will need the auxiliary drift coefficient
\begin{align}
\varphi_2(x, y) &:= (\lambda^{-1} b + c)(x, y) \label{psisecondregime}.
\end{align}

Finally, note that in the It\^o formula (Lemma \ref{l:itolemma}), when one considers the extended model, two additional terms, $\frac{\sqrt{\epsilon}}{\sqrt{\eta}}\int^t_0 (\nabla_x F b ) (\xe_s, \yeps_s)ds$ and $\frac{1}{\sqrt{\epsilon\eta}} \int^t_0 (\nabla_y F g ) (\xe_s, \yeps_s) ds$, appear on the right hand side.

We are now ready to state our asymptotic theorems for the extended model.

\begin{theorem}\label{t:xlimitextended} Suppose that one is concerned with a class $\mathcal{C}$ of pairs $\varepsilon:=(\epsilon, \eta)$ for which there exists $\lambda\in[0,\infty)$ such that $\lim_{\varepsilon\to0}\frac{\sqrt\eta}{\sqrt\epsilon}=\lambda$. Let $*\in\{1, 2\}$ indicate respectively the first or second regime. Assume Conditions \ref{c:regularity}, \ref{c:regularityadditional}, and \ref{c:recurrenceextended}-$(\alpha,\beta,\gamma)$, where $\alpha \geq 0$, $\beta \geq 2$, $\gamma \geq 0$, and $T \beta \gamma \sup_{y \in \mathcal{Y}} ||\tau(y)||^2 < 2 \alpha$; in the first regime, assume also Condition \ref{c:centering}. For any $0 < p < \frac{2 \alpha}{T \beta \gamma \sup_{y \in \mathcal{Y}} ||\tau(y)||^2}$, there is a constant $\tilde K$ such that for $\varepsilon:=(\epsilon,\eta)\in\mathcal{C}$ sufficiently small,
\begin{align*}
E\sup_{0\leq t\leq T}\left|\xe_t - \bar X_{*,t}\right|^p&\leq \tilde K \left( \sqrt\epsilon^p + \sqrt\eta^p \right),
\end{align*}
where $\bar X_*$ is the (deterministic) solution of the integral equation
\begin{align*}
\bar X_{*,t} &= x_0 + \int^t_0 \bar \varphi_*(\bar X_{*,s})ds,
\end{align*}
where $\bar \varphi_*$ is obtained, depending on the regime, by averaging (\ref{psifirstregime}) or (\ref{psisecondregime}) with respect to the invariant measure $\mu$.
\end{theorem}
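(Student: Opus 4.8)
The plan is to mirror the proof of Theorem \ref{t:xlimit}, reducing the problem to an ergodic theorem adapted to the extended model \eqref{e:extendedmodel} and then closing with the Gr\"onwall argument of \cite[Theorem 1]{gs:statinf}. The two substantive adjustments are (i) re-securing the a-priori bounds of Lemmata \ref{l:sigmadw}--\ref{l:Dxbound} in the presence of the additional singular drift $\frac{1}{\sqrt{\epsilon\eta}}g(\yeps)$ in the fast component, and (ii) in the first regime, isolating the limiting contribution of the asymptotically-singular term $\frac{\sqrt\epsilon}{\sqrt\eta}b(\yeps)$ by means of the corrector $\Psi$ of \eqref{correction}.

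First I would re-establish the a-priori bounds. After rescaling time by $\eta$, the fast process $\yeps$ carries the $\varepsilon$-dependent drift $f+\frac{\sqrt\eta}{\sqrt\epsilon}g$, and Condition \ref{c:recurrenceextended}-$(\alpha,\beta)$ is tailored precisely so that the recurrence estimate underlying Lemma \ref{l:yboundexponential} holds uniformly for $\frac{\sqrt\eta}{\sqrt\epsilon}$ in the neighborhood $\Lambda$ of $\lambda$. Since $\frac{\sqrt\eta}{\sqrt\epsilon}\to\lambda$, this ratio lies in $\Lambda$ for $\varepsilon$ small, so the exponential-moment bound of Lemma \ref{l:yboundexponential}, and hence Lemmata \ref{l:xbound}, \ref{l:extendedbound}, and \ref{l:Dxbound}, carry over with the same proofs; here the gradient bound in Condition \ref{c:recurrenceextended}-$(\alpha,\beta,\gamma)$ (namely $\|\nabla_x c\|\le\gamma|y|^\beta$ in the first regime, and $\sup_{\tilde\lambda\in\Lambda}\|\tilde\lambda^{-1}\nabla_x b+\nabla_x c\|\le\gamma|y|^\beta$ in the second) is exactly what is needed to control the drift gradient in the Malliavin-derivative estimate.

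Next I would prove the analog of the ergodic theorem, Theorem \ref{t:ergodic1}, with the limiting generator $\mathcal{L}$ of \eqref{limitinginfinitesimalgenerator} and its invariant measure $\mu$. For a test function $h$ I solve $\mathcal{L}\Phi=h-\bar h$ and apply the It\^o formula (Lemma \ref{l:itolemma}, augmented by the two extra terms noted above) to $\Phi(\xe,\yeps)$. Because the actual fast dynamics carries $f+\frac{\sqrt\eta}{\sqrt\epsilon}g$ rather than $f+\lambda g$, the $\frac1\eta$-scaled terms produce $\frac{1}{\eta}(\mathcal{L}-\lambda g\cdot\nabla_y)\Phi$, which together with the singular $\frac{1}{\sqrt{\epsilon\eta}}\nabla_y\Phi\cdot g$ and the relation $\mathcal{L}\Phi=h-\bar h$ combine into $\frac{1}{\eta}(h-\bar h)+\frac{1}{\eta}\bigl(\frac{\sqrt\eta}{\sqrt\epsilon}-\lambda\bigr)\nabla_y\Phi\cdot g$. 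Rearranging and multiplying by $\eta$, the residual becomes $\bigl(\frac{\sqrt\eta}{\sqrt\epsilon}-\lambda\bigr)\int_0^t(\nabla_y\Phi\cdot g)(\xe_s,\yeps_s)\,ds$, which vanishes in $L^p$ since $\frac{\sqrt\eta}{\sqrt\epsilon}\to\lambda$ and the integral is bounded by the a-priori estimates; the boundary, $c$-drift, $W^H$-integral, and $B$-integral terms are handled exactly as in Theorem \ref{t:ergodic1} and vanish after multiplication by $\eta$, yielding the rate $\sqrt\eta^p$.

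With the ergodic theorem in hand I treat the singular term. In the second regime I write $\frac{\sqrt\epsilon}{\sqrt\eta}\int_0^t b(\xe_s,\yeps_s)\,ds=\lambda^{-1}\int_0^t b\,ds+\bigl(\frac{\sqrt\epsilon}{\sqrt\eta}-\lambda^{-1}\bigr)\int_0^t b\,ds$, where the second summand vanishes because $\frac{\sqrt\epsilon}{\sqrt\eta}\to\lambda^{-1}$ and the integral is bounded, so the effective slow drift is $\varphi_2=\lambda^{-1}b+c$ up to a negligible error. In the first regime (so $\lambda=0$ and $\mathcal{L}=f\cdot\nabla_y+\frac12(\tau\tau^T):\nabla_y^2$) I apply the augmented It\^o formula to $\Psi(\yeps)$ with $\mathcal{L}\Psi=-b$ and rearrange to obtain
\begin{align*}
\frac{\sqrt\epsilon}{\sqrt\eta}\int_0^t b(\yeps_s)\,ds &= -\sqrt{\epsilon\eta}\,\bigl(\Psi(\yeps_t)-\Psi(y_0)\bigr) + \int_0^t (\nabla_y\Psi\cdot g)(\yeps_s)\,ds\\
&\hspace{2pc} + \sqrt\epsilon\int_0^t (\nabla_y\Psi\,\tau)(\yeps_s)\,dB_s,
\end{align*}
where the first and third terms vanish in $L^p$ at rate $\sqrt{\epsilon\eta}$ and $\sqrt\epsilon$ respectively (using polynomial bounds on $\Psi,\nabla_y\Psi$ and the a-priori estimates), leaving precisely the homogenization contribution $\int_0^t(\nabla_y\Psi\cdot g)(\yeps_s)\,ds$; combined with the $c$-drift this produces the effective slow drift $\varphi_1=(\nabla_y\Psi\cdot g)+c$ up to a negligible error. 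In either regime, applying the ergodic theorem to $\varphi_*$ and then running the Gr\"onwall argument of Theorem \ref{t:xlimit} identifies the limit as $\bar X_*$ and delivers the stated $L^p$ rate $\sqrt\epsilon^p+\sqrt\eta^p$. The main obstacle is the uniform control of the fast process under the singular $g$-drift: everything hinges on Condition \ref{c:recurrenceextended} delivering the exponential-moment and Malliavin-derivative bounds uniformly for $\frac{\sqrt\eta}{\sqrt\epsilon}\in\Lambda$, after which the first-regime corrector $\Psi$ cleanly extracts the homogenization term and the ergodic theorem absorbs the $\varepsilon$-dependence of the fast generator through the vanishing $\bigl(\frac{\sqrt\eta}{\sqrt\epsilon}-\lambda\bigr)$ factor.
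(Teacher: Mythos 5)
Your proposal follows essentially the same route as the paper's (very terse) proof: reduce to the scheme of Theorem \ref{t:xlimit} --- a-priori bounds, ergodic theorem, Gr\"onwall --- with the bounds re-secured through the uniformity over the neighborhood $\Lambda$ built into Condition \ref{c:recurrenceextended}, and with the first-regime singular term extracted by the corrector $\Psi$ of \eqref{correction}. Your displayed identity for $\frac{\sqrt\epsilon}{\sqrt\eta}\int_0^t b(\yeps_s)\,ds$ is precisely the paper's \eqref{representationofsingularterm}, and the paper then concludes, exactly as you do, that the proof proceeds with $\varphi_1$ (resp.\ $\varphi_2$) in place of $c$.

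There is, however, one step in your write-up that does not deliver what you claim of it, and it is worth flagging even though the paper's own one-line proof is silent on the same point. Your adapted ergodic theorem leaves the residual $\bigl(\frac{\sqrt\eta}{\sqrt\epsilon}-\lambda\bigr)\int_0^t(\nabla_y\Phi\cdot g)(\xe_s,\yeps_s)\,ds$, and your second-regime reduction leaves the analogue $\bigl(\frac{\sqrt\epsilon}{\sqrt\eta}-\lambda^{-1}\bigr)\int_0^t b(\xe_s,\yeps_s)\,ds$; you dismiss both because $\frac{\sqrt\eta}{\sqrt\epsilon}\to\lambda$. That yields only $o(1)$, not $O(\sqrt\epsilon+\sqrt\eta)$: the theorem assumes no rate for the convergence of the ratio, so for instance $\frac{\sqrt\eta}{\sqrt\epsilon}-\lambda$ could decay like $1/\log(1/\epsilon)$, in which case these residuals dominate $\sqrt\epsilon^p+\sqrt\eta^p$ and your concluding claim that Gr\"onwall ``delivers the stated rate'' is a non sequitur. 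To obtain the stated bound one needs $\bigl|\frac{\sqrt\eta}{\sqrt\epsilon}-\lambda\bigr|=O(\sqrt\epsilon+\sqrt\eta)$, i.e.\ a quantitative assumption of the type (the $\kappa$-condition) that the paper introduces only for the fluctuation result, Theorem \ref{t:fluctuationsextended}. Since the actual fast generator contains $\frac{\sqrt\eta}{\sqrt\epsilon}\,g\cdot\nabla_y$ rather than $\lambda g\cdot\nabla_y$, while the invariant measure $\mu$ and the correctors are built from the limiting generator, this discrepancy is unavoidable in either your formulation or the paper's; your write-up has the virtue of making the offending terms explicit, but as it stands it proves convergence with rate $\sqrt\epsilon^p+\sqrt\eta^p+\bigl|\frac{\sqrt\eta}{\sqrt\epsilon}-\lambda\bigr|^p$, not the rate claimed in the theorem.
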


\begin{proof} The proof is almost exactly the same as that of Theorem \ref{t:xlimit}, except that in establishing the analogue of Lemma \ref{l:xbound} in the first regime we must now consider carefully the asymptotically-singular term. Letting $\Psi$ be as in (\ref{correction}), applying the It\^o lemma with $F=\Psi$, and rearranging terms, we obtain
\begin{align}
\frac{\sqrt{\epsilon}}{\sqrt{\eta}}\int^t_0 b(\yeps_s) ds &= \sqrt{\epsilon\eta} \hspace{0.2pc} \Big(\Psi(y_0)-\Psi(\yeps_t)\Big) + \int^t_0 (\nabla_y \Psi g ) (\ye_s) ds + \sqrt{\epsilon} \int^t_0 (\nabla_y\Psi\tau)(\ye_s) dB_s \label{representationofsingularterm} \\
&=: \int^t_0 (\nabla_y \Psi g ) (\yeps_s) ds + \mathcal{R}^\varepsilon_{t}.\nonumber
\end{align}
Here, $\mathcal{R}^\varepsilon$ vanishes. Thus, the proof may proceed as before with $\varphi_1$ in place of $c$.

\end{proof}

To study the distribution of the fluctuations, we must quantify more
precisely the difference between the true drift and the approximate
drift, as was done in formulating the Poisson equation (\ref{poissonequation}) in the proof of Theorem \ref{t:xlimit}. By Theorem 3 in \cite{pardoux2003poisson}, with $*\in\{1,2\}$ indicating the regime, the equations
\begin{align}
\mathcal{L}\Phi_{*}(x,y)&=-(\varphi_{*}(x,y)-\bar{\varphi_{*}}(x)) \label{poissonequationextended} \\
\int_{\mathcal{Y}}\Phi_{*}(x,y)&d\mu(y)=0\nonumber
\end{align}
admit a unique solution $\Phi_{*}$ in the class of functions that grow at most polynomially in $|y|$ as $|y|\to\infty$.

\begin{theorem}\label{t:fluctuationsextended} Suppose that one is concerned with a class $\mathcal{C}$ of pairs $\varepsilon:=(\epsilon,\eta)$ for which there exists $\lambda\in[0,\infty)$ such that $\lim_{\varepsilon\to0}\frac{\sqrt\eta}{\sqrt\epsilon}=\lambda$. Suppose moreover that $\mathcal{C}$ is such that there is a $\kappa\in\mathbb{R}$ for which $\lim_{\varepsilon\to0}\frac{1}{\sqrt{\epsilon}} \left( \frac{\sqrt{\eta}}{\sqrt{\epsilon}} - \lambda \right)=\kappa$. Assume Conditions \ref{c:regularity}, \ref{c:regularityadditional}, and \ref{c:recurrenceextended}-$(\alpha,\beta,\gamma)$, where $\alpha \geq 0$, $\beta \geq 2$, $\gamma \geq 0$, and $T \beta \gamma \sup_{ y \in \mathcal{Y} } || \tau ( y ) ||^2 < 2 \alpha$; in the first regime, assume also Condition \ref{c:centering}. With $\Psi$ and $\Phi_{*}$ respectively as in (\ref{correction}) and (\ref{poissonequationextended}), set $\Sigma_\Psi := (\overline{(\nabla_y\Psi\tau)(\nabla_y\Psi\tau)^T})^{1/2}$ and $\Sigma_{\Phi_{*}} := (\overline{(\nabla_y\Phi_{*}\tau)(\nabla_y\Phi_{*}\tau)^T})^{1/2}$.

In the first regime, the family of processes $\{\theta^\varepsilon\}_\varepsilon$ converges in distribution as $\varepsilon\to0$ to the law of the solution $\theta_1$ of the mixed SDE
\begin{align}
\theta_{1,t} &= \int^t_0 ( \nabla_x \bar {\varphi_1} )(\bar X_{1,s}) \cdot \theta_{1,s} ds + \kappa \int^t_0 \overline{\nabla_y\Phi_{1} \cdot g}(\bar X_{1,s}) ds \nonumber \\
&\hspace{5pc} + \int^t_0 \Sigma_\Psi(\bar X_{1,s}) d\tilde{B}_s + \int^t_0 \bar \sigma \delta \tilde{W}^H_s,\nonumber
\end{align}
where $\tilde{W}^H$ is a fractional Brownian motion with Hurst index $H$ and $\tilde{B}$ is a standard Brownian motion independent of $W^H$. We point out that under our assumptions in this regime we have in fact $\nabla_x \bar {\varphi_1} = \nabla_x \bar c$.

In the second regime, the family of processes $\{\theta^\varepsilon\}_\varepsilon$ converges in distribution as $\varepsilon\to0$ to the law of the solution $\theta_2$ of the mixed SDE
\begin{align}
\theta_{2,t} &= \int^t_0 ( \nabla_x \bar {\varphi_2} )(\bar X_{2,s}) \cdot \theta_{2,s} ds + \kappa \int^t_0 \overline{\nabla_y\Phi_{2} \cdot g}(\bar X_{2,s}) ds - \frac{\kappa}{\lambda^2} \int^t_0 \bar b (\bar X_{2,s}) ds \nonumber\\
&\hspace{5pc} + \lambda \int^t_0 \Sigma_{\Phi_{2}}(\bar X_{2,s}) d\tilde{B}_s + \int^t_0 \bar \sigma \delta \tilde{W}^H_s,\nonumber
\end{align}
where $\tilde{W}^H$ is a fractional Brownian motion with Hurst index $H$ and $\tilde{B}$ is a standard Brownian motion independent of $W^H$.
\end{theorem}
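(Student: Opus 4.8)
The plan is to mirror the proof of Theorem~\ref{t:fluctuations}: establish tightness of the relevant family (the analogue of Lemma~\ref{l:tightness}), extract via Skorohod's representation theorem a subsequence convergent in distribution, identify the limit term by term in an explicit decomposition of $\theta^\varepsilon$, and close by uniqueness in law of the limiting mixed SDE. The supporting a-priori bounds and the ergodic theorem carry over from the original model exactly as in the proof of Theorem~\ref{t:xlimitextended}; crucially, the uniform-in-$\tilde\lambda$ form of Condition~\ref{c:recurrenceextended}-$(\alpha,\beta)$ is what renders these bounds uniform in $\varepsilon$, since $\frac{\sqrt\eta}{\sqrt\epsilon}\in\Lambda$ for $\varepsilon$ small. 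Writing $\mathcal{L}_0 := f\cdot\nabla_y + \frac12(\tau\tau^T):\nabla_y^2$, the fast drift operator for the extended model is $\frac{1}{\eta}\mathcal{L}_0 + \frac{1}{\sqrt{\epsilon\eta}}g\cdot\nabla_y = \frac{1}{\eta}\mathcal{L}^\varepsilon$ with $\mathcal{L}^\varepsilon := \mathcal{L}_0 + \frac{\sqrt\eta}{\sqrt\epsilon}g\cdot\nabla_y = \mathcal{L} + \left(\frac{\sqrt\eta}{\sqrt\epsilon}-\lambda\right)g\cdot\nabla_y$; the discrepancy $\frac{\sqrt\eta}{\sqrt\epsilon}-\lambda$ between the $\varepsilon$-dependent generator $\mathcal{L}^\varepsilon$ and its limit $\mathcal{L}$ is precisely the source of the $\kappa$-correction, and the two regimes demand separate decompositions.

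In the second (averaging) regime $\lambda\in(0,\infty)$, where $\frac{\sqrt\epsilon}{\sqrt\eta}\to\lambda^{-1}$ and the $b$-term is not singular, I would start from $\theta^\varepsilon_t = \frac{1}{\sqrt\epsilon}\int^t_0\left[\varphi_2(\xe_s,\yeps_s)-\bar\varphi_2(\bar X_{2,s})\right]ds + \frac{1}{\sqrt\epsilon}\left(\frac{\sqrt\epsilon}{\sqrt\eta}-\lambda^{-1}\right)\int^t_0 b(\xe_s,\yeps_s)\,ds + \int^t_0\sigma(\yeps_s)\delta W^H_s$, where $\varphi_2 = \lambda^{-1}b+c$. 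Expanding $\frac{\sqrt\eta}{\sqrt\epsilon} = \lambda + \kappa\sqrt\epsilon + o(\sqrt\epsilon)$ yields $\frac{1}{\sqrt\epsilon}\left(\frac{\sqrt\epsilon}{\sqrt\eta}-\lambda^{-1}\right)\to -\frac{\kappa}{\lambda^2}$, so the middle term converges to $-\frac{\kappa}{\lambda^2}\int^t_0\bar b(\bar X_{2,s})\,ds$ by the ergodic theorem together with Theorem~\ref{t:xlimitextended}. Splitting the first integral into an $I^\varepsilon$ and a $II^\varepsilon$ exactly as in \eqref{firstterm}--\eqref{secondterm}, the term $I^\varepsilon$ linearizes to $\int^t_0(\nabla_x\bar\varphi_2)(\bar X_{2,s})\cdot\theta_{2,s}\,ds$ and $III^\varepsilon\to\int^t_0\bar\sigma\,\delta\tilde W^H_s$ as in the proof of Lemma~\ref{l:tightness}. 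The heart of the matter is $II^\varepsilon$: applying the extended It\^o lemma to $\Phi_2$ and using $\mathcal{L}^\varepsilon\Phi_2 = -(\varphi_2-\bar\varphi_2) + \left(\frac{\sqrt\eta}{\sqrt\epsilon}-\lambda\right)\nabla_y\Phi_2\cdot g$ from \eqref{poissonequationextended}, then multiplying through by $\frac{\eta}{\sqrt\epsilon}$, every term acquires a vanishing prefactor except two: $\frac{1}{\sqrt\epsilon}\left(\frac{\sqrt\eta}{\sqrt\epsilon}-\lambda\right)\int^t_0\nabla_y\Phi_2\cdot g\,ds \to \kappa\int^t_0\overline{\nabla_y\Phi_2\cdot g}(\bar X_{2,s})\,ds$, and the martingale $\frac{\sqrt\eta}{\sqrt\epsilon}\int^t_0(\nabla_y\Phi_2\tau)(\xe_s,\yeps_s)\,dB_s \to \lambda\int^t_0\Sigma_{\Phi_2}(\bar X_{2,s})\,d\tilde B_s$ by a martingale central limit theorem, its quadratic variation averaging to $\int^t_0\Sigma_{\Phi_2}^2(\bar X_{2,s})\,ds$. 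Collecting these reproduces $\theta_2$.

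In the first (homogenization) regime $\lambda = 0$, where $b = b(y)$ and $\frac{\sqrt\epsilon}{\sqrt\eta}b(\yeps)$ is singular, I would first insert the representation \eqref{representationofsingularterm} to write $\theta^\varepsilon_t = \frac{1}{\sqrt\epsilon}\int^t_0\left[\varphi_1(\xe_s,\yeps_s)-\bar\varphi_1(\bar X_{1,s})\right]ds + \sqrt\eta\left(\Psi(y_0)-\Psi(\yeps_t)\right) + \int^t_0(\nabla_y\Psi\tau)(\yeps_s)\,dB_s + \int^t_0\sigma(\yeps_s)\delta W^H_s$, with $\varphi_1 = \nabla_y\Psi\cdot g + c$. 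The boundary term carries $\sqrt\eta\to 0$ and drops out; the new feature is that the corrector $\Psi$ itself contributes a genuine diffusion $\int^t_0(\nabla_y\Psi\tau)(\yeps_s)\,dB_s \to \int^t_0\Sigma_\Psi(\bar X_{1,s})\,d\tilde B_s$ by the martingale central limit theorem. Splitting the first integral into $I^\varepsilon$ and $II^\varepsilon$ as before, $I^\varepsilon$ gives $\int^t_0(\nabla_x\bar\varphi_1)(\bar X_{1,s})\cdot\theta_{1,s}\,ds$, where $\nabla_x\bar\varphi_1 = \nabla_x\bar c$ since $\Psi$ is $x$-independent. For $II^\varepsilon$ one applies the It\^o lemma to $\Phi_1$ (now with $\mathcal{L} = \mathcal{L}_0$, so that the $g$-drift is kept separate from the Poisson equation) and multiplies by $\frac{\eta}{\sqrt\epsilon}$; the surviving contribution is $\frac{\sqrt\eta}{\epsilon}\int^t_0\nabla_y\Phi_1\cdot g\,ds \to \kappa\int^t_0\overline{\nabla_y\Phi_1\cdot g}(\bar X_{1,s})\,ds$, using $\frac{\sqrt\eta}{\epsilon} = \frac{1}{\sqrt\epsilon}\frac{\sqrt\eta}{\sqrt\epsilon}\to\kappa$ when $\lambda = 0$, whereas the $\Phi_1$-martingale from this step carries the vanishing prefactor $\frac{\sqrt\eta}{\sqrt\epsilon}\to 0$ and drops out. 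Again $III^\varepsilon\to\int^t_0\bar\sigma\,\delta\tilde W^H_s$, and collecting the terms reproduces $\theta_1$.

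The principal obstacle is the careful bookkeeping that isolates the $\kappa$-correction from the generator mismatch $\mathcal{L}^\varepsilon-\mathcal{L}$: this is exactly what forces the sharper rate assumption $\frac{1}{\sqrt\epsilon}\left(\frac{\sqrt\eta}{\sqrt\epsilon}-\lambda\right)\to\kappa$ and is responsible for the qualitative split between the regimes, the correction entering through the scale $\frac{1}{\sqrt{\epsilon\eta}}$ of the $g$-drift in the first regime and through $\frac{1}{\eta}\left(\frac{\sqrt\eta}{\sqrt\epsilon}-\lambda\right)$ in the second. A secondary point requiring care is the joint identification of the two Gaussian limits: since $B$ and $W^H$ are independent, their contributions decouple asymptotically and one obtains a standard Brownian motion $\tilde B$ independent of the fractional Brownian motion $\tilde W^H$, the requisite joint convergence following from the componentwise tightness together with the vanishing cross-variation. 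The argument is finally closed by uniqueness in law for the limiting mixed SDEs, which holds since each has drift affine in $\theta$ and additive noise.
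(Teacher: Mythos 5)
Your proposal is correct and takes essentially the same route as the paper: the paper's own proof is a two-line remark deferring to Theorem \ref{t:xlimitextended}, the representation (\ref{representationofsingularterm}), and the proof of Theorem \ref{t:fluctuations}, and your argument is precisely that plan carried out, with the prefactor bookkeeping ($\frac{\sqrt\eta}{\epsilon}\to\kappa$ in the first regime, $\frac{1}{\sqrt\epsilon}(\frac{\sqrt\eta}{\sqrt\epsilon}-\lambda)\to\kappa$ and $\frac{1}{\sqrt\epsilon}(\frac{\sqrt\epsilon}{\sqrt\eta}-\lambda^{-1})\to-\frac{\kappa}{\lambda^2}$ in the second) done correctly. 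The only cosmetic quibble is that ``vanishing cross-variation'' is not quite the right language for identifying the joint limit with the fBm component, but the underlying point (independence of $B$ and $W^H$ passing to independence of $\tilde B$ and $\tilde W^H$ via the Skorohod/identification step) is sound.
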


\begin{proof} Given Theorem \ref{t:xlimitextended} and, in particular, the representation (\ref{representationofsingularterm}), the proof is nearly identical to that of Theorem \ref{t:fluctuations}.

\end{proof}

\appendix

\section{Preliminaries}\label{A:Appendix}

\subsection{Fractional Brownian motion}\label{SS:fBm_preliminaries}

A fractional Brownian motion (fBm) is a centered Gaussian process $W^H =
\{ W^H_t \}_{ t \geq 0 } \subset L^2(\Omega)$, characterized by its
covariance function
\begin{equation*}
R_H(t,s) := E (W^H_t W^H_s) = \frac{1}{2} \left( s^{2H} + t^{2H} -
  \left\vert t-s \right\rvert^{2H} \right).
\end{equation*}
It is straightforward to verify that increments of fBm are stationary. The parameter $H \in (0,1)$ is usually referred to as the Hurst exponent, Hurst parameter, or Hurst index.

By Kolmogorov's continuity criterion, such a process admits a modification with continuous sample paths, and we always choose to work with such. In this case one may show in fact that almost every sample path is locally H\"older continuous of any order strictly less than $H$. It is this sense in which it is often said that the value of $H$ determines the regularity of the sample paths.

Note that when $H = \frac{1}{2}$, the covariance function is $R_{\frac{1}{2}}(t,s) = t \wedge s$. Thus, one sees that $W^{\frac{1}{2}}$ is a standard Brownian motion, and in particular that its disjoint increments are independent. In contrast to this, when $H \neq \frac{1}{2}$, nontrivial increments are not independent. In particular, when $H > \frac{1}{2}$, the process exhibits long-range dependence.

Note moreover that when $H \neq \frac{1}{2}$, the fractional Brownian motion is not a semimartingale, and the usual It\^o calculus therefore does not apply.

Another noteworthy property of fractional Brownian motion is that it
is self-similar in the sense that, for any constant $a >0$, the
processes $\left\{ W^H_t\right\}_{ t \geq 0 }$ and $\left\{ a^{-H}
  W^H_{at}\right\}_{ t \geq 0 }$ have the same distribution.

For more details about fractional Brownian motion, we refer the reader
to the monographs \cite{biagini_stochastic_2008, nourdin_selected_2012, nualart_malliavin_2006}.

The self-similarity and long-memory properties of the fractional
Brownian motion make it an interesting and suitable input noise in
many models in various fields such as analysis of financial time series,
hydrology, and telecommunications. However, in order to develop
interesting models based on fractional Brownian motion, one needs a
stochastic calculus with respect to the fBm, which will make use of
the stochastic calculus of variations, or Malliavin calculus, introduced
in the next subsection.

\subsection{Elements of Malliavin calculus}\label{SS:MalliavinCalculus_prelim}
\label{malliavinelements}
We outline here the main tools of Malliavin calculus needed in this
paper. For a complete treatment of this topic, we refer the reader to
\cite{nualart_malliavin_2006}.

Let $W^H = \left\{ W^H_t \right\}_{ t \geq 0 } \subset L^2(\Omega)$ be a fractional Brownian motion with Hurst index $H \in (\frac{1}{2},1)$ and let us fix a time interval $[0,T]$, where $T\in\mathbb{R}_+$.

The formula
\begin{equation*}
\left\langle \chi_{[0,s]},\chi_{[0,t]}\right\rangle_{\mathfrak{H}} := R_H(s,t)
\end{equation*}
induces an inner product on the set $\mathcal{E}$ of step functions on $[0, T]$. We denote by $\mathfrak{H}$ the Hilbert space obtained as the completion of the resulting inner product space.

It can be shown that the formula
\begin{equation}
\label{innerprodfbm}
\left\langle \varphi, \psi \right\rangle_{\mathfrak{H}} := \alpha_H \int_0^T \int_0^T \varphi(r)\psi(u)\left\vert r-u \right\rvert^{2H-2}du dr,
\end{equation}
with $\alpha_H := H(2H-1)$, extends the above inner product from $\mathcal{E}$ to the superset $L^2([0, T])$, and that it is equivalent to define $\mathfrak{H}$ as the completion of this extended inner product space (see e.g. \cite{decreusefond_stochastic_1999}).

Now, the map $\chi_{[0, t]} \mapsto W^H_t$ extends to a linear isometry of Hilbert spaces $\mathfrak{H} \to L^2(\Omega)$. We will denote this map also by $W^H$.

Recall that we are in the setting in which $H > \frac{1}{2}$. While one may interpret $\mathfrak{H}$ as a space of distributions, it has been shown in \cite{pipiras_integration_2000,pipiras_are_2001} that when $H > \frac{1}{2}$, the elements may not be ordinary functions but distributions of negative order. Adapting the inner product \eqref{innerprodfbm}, one can introduce the space $\vert \mathfrak{H} \vert$ of equivalence classes of measurable functions $\varphi$ on $[0, T]$ for which

\begin{equation*}
\left\lVert \varphi \right\rVert_{\vert \mathfrak{H} \vert}^2 := \alpha_H
\int_0^T \int_0^T \left\vert \varphi(r)
\right\rvert \left\vert \varphi(u)
\right\rvert \left\vert r-u
\right\rvert^{2H-2}du dr < \infty,
\end{equation*}
which is in fact a Banach space equipped with this square norm.

It can be shown that one has the following chain of continuous inclusions:
\begin{equation*}
L^2([0, T]) \subset L^{\frac{1}{H}}([0,T]) \subset \vert \mathfrak{H} \vert \subset \mathfrak{H}.
\end{equation*}

Let us now denote by $\mathcal{S}$ the set of smooth cylindrical random variables
of the form $F = f
\left( W^H(\varphi_1), \cdots , W^H(\varphi_n) \right)$, where $n \geq 1$, $\{\varphi_i\}^n_{i=1} \subset
\mathfrak{H}$, and $f \in C_b^{\infty} \left(
  \mathbb{R}^n \right)$ ($f$ and all of its partial derivatives of all orders are bounded functions).

The Malliavin derivative of such a smooth cylindrical random variable
$F$ is defined as the $\mathfrak{H}$-valued random variable given by
\begin{equation*}
DF := \sum_{i=1}^n \frac{\partial f}{\partial x_i} \left( W^H(\varphi_1),
  \cdots, W^H(\varphi_n) \right)\varphi_i.
\end{equation*}
The derivative operator $D$ is a closable operator from $L^2(\Omega)$
into $L^2(\Omega ; \mathfrak{H})$, and we continue to denote by $D$
the closure of the derivative operator, the domain of which we denote by $\mathbb{D}^{1,2}$, and which is a Hilbert space in the Sobolev-type norm
\begin{equation*}
\left\lVert F \right\rVert_{1,2}^2 := E (F^2) + E \left( \left\lVert DF \right\rVert_{\mathfrak{H}}^2 \right).
\end{equation*}

Similarly one obtains a derivative operator $D:\mathbb{D}^{1, 2}(\mathfrak{H}) \to L^2(\Omega; \mathfrak{H} \otimes \mathfrak{H})$ as the closure of $D:L^2(\Omega; \mathfrak{H}) \to L^2(\Omega; \mathfrak{H} \otimes \mathfrak{H})$, and so on.

Note that more generally with $p > 1$ one can analogously obtain $\mathbb{D}^{1, p}$ as Banach spaces of Sobolev type by working with $L^p(\Omega)$.
\\~\\
We can now introduce the divergence operator
$\delta$ as the adjoint of the derivative operator $D$. By definition, an
$\mathfrak{H}$-valued random variable $u \in L^2(\Omega ;
\mathfrak{H})$ is in the domain of $\delta$, which we denote by
$\operatorname{dom}\delta$, if there is a constant $c_u$ for which, for all $F \in \mathbb{D}^{1, 2}$,
\begin{equation*}
\left\vert E \left( \left\langle DF ,u \right\rangle_{\mathfrak{H}} \right)
\right\rvert \leq c_u \left\vert F \right\rvert_{L^2(\Omega)}.
\end{equation*}
For such an element $u$, $\delta(u)$ is defined
by duality as the unique element of $L^2(\Omega)$ such that, for each $F \in \mathbb{D}^{1, 2}$,
\begin{equation*}
E \left( F \delta(u) \right) = E \left( \left\langle DF, u \right\rangle_{\mathfrak{H}} \right).
\end{equation*}
It can be shown that
$\mathbb{D}^{1,2}(\mathfrak{H}) \subset \operatorname{dom}\delta$, and
that for any $u \in \mathbb{D}^{1,2}(\mathfrak{H})$,
\begin{equation*}
E \left( \delta(u)^2 \right) = E \left(\left\lVert u
  \right\rVert_{\mathfrak{H}}^2  \right) + E \left( \left\langle Du,
    (Du)^{*} \right\rangle_{\mathfrak{H}\otimes \mathfrak{H}} \right),
\end{equation*}
where $(Du)^{*}$ is the adjoint of $Du$ in the Hilbert space $\mathfrak{H}\otimes \mathfrak{H}$.\\

\subsection{Multiple Wiener integrals of deterministic functions with respect to fractional Brownian motion}

\subsection{Stochastic integration with respect to fractional Brownian
  motion}\label{Eq:StochasticIntergation_prelim}
~\\
In this subsection we state useful properties of multiple Wiener integrals of elements of $\mathfrak{H}$ with respect the fractional Brownian motion and introduce two main methods used to define stochastic integrals with respect to the fractional Brownian motion. These and other available approaches are collected and discussed in detail in the monograph \cite{biagini_stochastic_2008}.

The first method, introduced in \cite{decreusefond_stochastic_1999}, is based on the stochastic calculus of variations, or Malliavin calculus. Owing to the central role played by the divergence operator introduced in Subsection \ref{malliavinelements}, stochastic integrals of this type are commonly referred to as divergence integrals.

The second approach uses the fact that the H\"older regularity of the paths of fBm with $H > \frac{1}{2}$ is sufficient to allow integration in the sense of Z\"ahle \cite{zahle_integration_1998} or \cite{russo_forward_1993} (see also the classic paper \cite{young_inequality_1936}). Stochastic integrals of this type are often called pathwise integrals.
\begin{remark}
The divergence integral can be formulated for fractional Brownian motion with any $H \in (0, 1)$ whereas the pathwise integral exists only for $H > \frac{1}{2}$. One reason that we restrict attention to the case $H > \frac{1}{2}$ in this work is so that we may make use of known results for both.
\end{remark}

\subsubsection{Divergence integration}\label{SS:DivergenceIntergation}

The definition of the divergence operator as the adjoint of the Malliavin derivative operator suggests interpretation as an integral. Indeed, in the standard Brownian motion case ($H = \frac{1}{2}$), the divergence of an adapted, It\^o-integrable process coincides with its familiar It\^o integral. In general one defines, for $u \in \operatorname{dom} \delta$ and $0 \leq t \leq T$,
\begin{equation*}
\int_0^t u_s \delta W^H_s := \delta(u
\chi_{[0,t]}),
\end{equation*}
which we call the divergence integral of $u$. Note that the divergence integral is always centered in the sense that its expected value is zero.
\\~\\
We shall make use of a maximal inequality for the divergence integral, which we now state. The interested reader is referred to \cite{alosnualart} for more details.

Denote by $\mathbb{L}_H^{1,p}$ the set of elements
$u \in \mathbb{D}^{1,p}(\mathfrak{H})$ for which
\begin{equation*}
E \left(
  \left\vert u \right\rvert_{L^{\frac{1}{H}}([0,T])}^p + \left\vert
    Du \right\rvert_{L^{\frac{1}{H}}([0,T]^2)}^p \right) < \infty.
\end{equation*}
There is a constant $C$ depending only on $H$ and $T$ such that for any $p$ with $pH > 1$ and any $u \in \mathbb{L}_H^{1, p}$,
\begin{equation*}
E \left( \sup_{0 \leq t \leq T} \left\vert \int_0^t u_s \delta W^H_s
\right\rvert^p \right) \leq C \left[ \int_0^T \left\vert E \left( u_s \right)\right\rvert^p ds + \int_0^T E
\left( \int_0^T \left\vert D_s u_r
\right\rvert^{\frac{1}{H}}ds \right)^{pH}dr \right].
\end{equation*}
Here, $D u_r$ is being interpreted as a stochastic process and the subscript $s$ in the notation $D_s u_r$ refers to its parameter. Note that if we denote by $\lambda$ the Lebesgue measure on $[0, T]$, by $P$ the probability measure on $\Omega$, and by $\omega \in \Omega$ the random state, then $D_s u_r$ is defined for $\lambda \times P$-almost-every pair $(s, \omega)$.

\subsubsection{Pathwise integration}\label{SS:PathwiseIntergation}

We present a version of pathwise integration that appears by the name of symmetric stochastic integration in \cite{russo_forward_1993}.

Let $u = \left\{ u_t \right\}_{0 \leq t \leq T}$ be a stochastic
process in $\mathbb{D}^{1,2}(\mathfrak{H})$. If one has that
\begin{equation*}
E \left(\left\lVert u \right\rVert_{\vert \mathfrak{H} \vert}^2 +
  \left\lVert Du \right\rVert_{\vert \mathfrak{H} \vert \otimes \vert
    \mathfrak{H} \vert}^2  \right) < \infty
\end{equation*}
and
\begin{equation*}
\int_0^T \int_0^T \left\vert D_su_t \right\rvert \left\vert t-s
\right\rvert^{2H-2} ds dt < \infty \operatorname{~  a.s.},
\end{equation*}
then the symmetric integral
\begin{equation*}
\int_0^T u_t dW^H_t
\end{equation*}
defined as the limit in probability as $\varepsilon$ tends to zero of
\begin{equation*}
\frac{1}{2\varepsilon} \int_0^T u_s \left( W^H_{(s+\varepsilon) \wedge
  T} - W^H_{(s-\varepsilon) \vee 0}\right)ds
\end{equation*}
exists and for each $t \in [0,T]$,
\begin{equation}
  \label{relationpathwise-divergence}
\int_0^t u_s dW^H_s =  \int_0^t u_s \delta W^H_s + \alpha_H \int_0^t \int_0^T D_r u_s \left\vert s-r
\right\rvert^{2H-2} dr ds.
\end{equation}
Thus one sees how the pathwise and divergence integrals are related to one another. Note in particular that whereas the divergence integral is centered, the pathwise integral generally speaking is not. In the setting of the model in this paper, however, the two integrals coincide.
\begin{remark}
Note that whenever one has $Du = 0$, as is the case for instance when the integrand $u$ and the fractional Brownian motion $W^H$ are independent stochastic processes, the relation \eqref{relationpathwise-divergence} says
\begin{equation*}
\int_0^t u_s dW^H_s =  \int_0^t u_s \delta W^H_s,
\end{equation*}
which is to say that the two approaches lead to the same integral and in particular that both are centered.
\end{remark}

\end{document}